\newtheorem{Theorem}{Theorem}[section]
\newtheorem*{theorem}{Theorem}
\newtheorem{lemma}[Theorem]{Lemma}
\newtheorem{proposition}[Theorem]{Proposition}
\newtheorem{corollary}[Theorem]{Corollary}
\newtheorem*{definition}{Definition}
\newtheorem{remark}[Theorem]{Remark}
\newtheorem{example}[Theorem]{Example}
\newcommand\cycle[2][\,]{%
  \readlist\thecycle{#2}%
  (\foreachitem\i\in\thecycle{\ifnum\icnt=1\else#1\fi\i})%
}
\subjclass[2020]{57K10, 57K18}
\keywords{link, cube of resolutions, smoothing, Khovanov homology}
\begin{document}

\title{On discrete symmetries of the cube of smoothings}

\author{Eva Horvat}
\address{University of Ljubljana, Faculty of Education, Kardeljeva plo\v s\v cad 16, 1000 Ljubljana, Slovenia, eva.horvat@pef.uni-lj.si}
\email{\href{mailto:eva.horvat@pef.uni-lj.si}{eva.horvat@pef.uni-lj.si}}

\begin{abstract} A link diagram with a labeled barycentric subdivision endows the cube of resolutions with an additional combinatorial structure. We study the set of symmetries, preserving this structure. We construct a combinatorial link invariant, based on the symmetries of triangulated smoothings.  
\end{abstract}

\maketitle

\section{Introduction}

The cube of resolutions of a link diagram represents an important combinatorial tool that has been used to construct powerful link invariants. The most famous of these are Khovanov homology, that categorifies the Jones polynomial of links \cite{KH}, Khovanov-Rozansky homology, that categorifies the HOMFLY-PT polynomial \cite{KR1,KR2}, knot Floer homology, developed by Ozsv\'{a}th and Szab\'{o} \cite{OS}, and the $s$-invariant of Rasmussen \cite{RA}. Dror Bar-Natan extended Khovanov homology to tangles by constructing a geometric complex, based on the cube of resolutions, and establishing its invariance \cite{BN}. 

Recent years have seen a resurge of activity in generalizations and extensions of theories mentioned above. The local nature of the geometric complex allowed the development of high-performance algorithms for homology computations \cite{BN1, KE}. Bar-Natan's cobordism-based category provided a template for categorifying other knot invariants, e. g. the odd annular Bar-Natan category \cite{NW} and Lie superalgebra categorifications. In \cite{SG}, a version of Khovanov homology for alternating links with marking data, inspired by instanton theory, is introduced. 

In this paper, we endow a link diagram with a labeled barycentric subdivision and study the combinatorial structure, induced on its cube of resolutions. Such cube admits a well-defined orientation of all its triangulated smoothings. We define a colored 1-dimensional complex, called the web of a triangulated diagram, that contains all the resolutions, and present the set of web moves. These diagrammatic tools allow us to study symmetries of triangulated smoothings. We define a cube of permutation data, based on a triangulated link diagram with a chosen web orientation. Constructing an equivalence relation on the set of labeled hypercubes, we prove our main result: 
\begin{theorem} The equivalence class of a cube of permutation data is an invariant of the underlying link. 
\end{theorem}

The paper is organized as follows. In Section \ref{sec1}, we review some basics on the construction of the Khovanov complex. Section \ref{sec2} defines diagram triangulations and the induced combinatorial data in the cube of smoothings. In Subsection \ref{subs21}, we present the construction of triangulated link diagrams, triangulated cube of smoothings and webs, and describe their basic properties. In Subsection \ref{subs22}, we study symmetries of triangulated smoothings. To each vertex $\mathbf{v}$ in the cube of smoothings, we associate a group $G_{\mathbf{v}}$, and a coordinated choice of orientations of all smoothings allows a well-defined vertex labeling by elements $\sigma _{\mathbf{v}}\in G_{\mathbf{v}}$. The resulting labeled hypercube is called a cube of permutation data, associated with a triangulated link diagram. We define an equivalence relation on the set of hypercubes with a vertex labeling, and show that the equivalence class of a cube of permutation data gives a link invariant.  

\section{Preliminaries} \label{sec1}

We review some basics on the construction of the Khovanov complex. A good introduction to Khovanov homology can be found in \cite{BN} or \cite{TUR}.

Suppose we have a diagram $\mathcal{D}$ of an oriented link $L$. Denote by $k_+$ (resp. $k_{-}$) the number of crossings with a positive (resp. negative) sign and let $k=k_{+}+k_{-}$.  At every crossing, the diagram may be locally transformed (resolved) by a 0-\emph{smoothing} or a 1-\emph{smoothing}, see Figure \ref{fig1}.  
\begin{figure}[H]
\begin{tikzpicture}
\matrix
{
\node {\begin{tikzpicture}[scale=0.50]
\draw[black, dashed] (0,0) circle (40pt); 
\draw[black, thick][-] (-1,-1) -- (1,1);
\draw[black, thick] (-1,1) -- (-0.1,0.1);
\draw[black, thick][-] (0.1,-0.1) -- (1,-1);
\end{tikzpicture}}; & \node {\begin{tikzpicture}[scale=0.50]
\draw[black, dashed] (0,0) circle (40pt); 
\draw[black, thick][-] (-1,-1) .. controls (-0.3,0) .. (-1,1) ;
\draw[black, thick][-] (1,-1) .. controls (0.3,0) .. (1,1);
\end{tikzpicture}};
& \node {\begin{tikzpicture}[scale=0.50]
\draw[black, dashed] (0,0) circle (40pt); 
\draw[black, thick][-] (-1,1) .. controls (0,0.3) .. (1,1) ;
\draw[black, thick] (-1,-1) .. controls (0,-0.3) .. (1,-1);
\end{tikzpicture}};  \\
};
\end{tikzpicture}
\caption{A crossing (left), its 0-smoothing (center) and its 1-smoothing (right)}
\label{fig1}
\end{figure}

Applying one of the two resolutions at every crossing of the diagram $\mathcal{D}$, we are left with a disjoint union of circles (an unlink diagram) that we call a \emph{resolution} or a \emph{smoothing} of $\mathcal{D}$. A $k$-crossing diagram has $2^{k}$ smoothings, labeled by words $\mathbf{v}\in \{0,1\}^{k}$. Here $\mathbf{v}_{l}=1$ (resp. $\mathbf{v}_{l}=0$) if at the $l$-th crossing, the 1-smoothing (resp. the 0-smoothing) has been applied.

\begin{figure}[H] 
\begin{tikzpicture}
\matrix [draw,column sep={1cm}]
{
\node {}; & \node (a) {$001$}; & \node (b) {$011$}; & \node{};\\
\node (c) {$000$}; & \node (d) {$010$}; & \node (e) {$101$}; & \node (f) {$111$};\\
\node {}; & \node (g) {$100$}; & \node (h) {$110$}; & \node{};\\
};
\draw [blue] (a.east) -- (b.west) node [above,midway] {};
\draw [blue] (a.east) -- (e.west) node [above,midway] {};
\draw [blue] (c.east) -- (a.west) node [above,midway] {};
\draw [blue] (c.east) -- (d.west) node [above,midway] {};
\draw [blue] (c.east) -- (g.west) node [above,midway] {};
\draw [blue] (d.east) -- (b.west) node [above,midway] {};
\draw [blue] (d.east) -- (h.west) node [above,midway] {};
\draw [blue] (g.east) -- (e.west) node [above,midway] {};
\draw [blue] (e.east) -- (f.west) node [above,midway] {};
\draw [blue] (b.east) -- (f.west) node [above,midway] {};
\draw [blue] (h.east) -- (f.west) node [above,midway] {};
\draw [blue] (g.east) -- (h.west) node [above,midway] {};
\end{tikzpicture}
\caption{The cube of smoothings for a diagram with 3 crossings}
\label{fig2}
\end{figure}

The set $\{0,1\}^{k}$  is the vertex set of a $k$-dimensional hypercube $Q_k$, with edges between every two words that differ exactly at one place. We draw the cube in a skewered manner, so that all vertices whose sum of coordinates equals $i$ have the same $x$-coordinate $i-k_{-}$ (see Figure \ref{fig2}). Let $\Gamma _{\mathbf{v}}$ denote the smoothing labeled by a word $\mathbf{v}$, and let $c_{\mathbf{v}}$ denote the number of components of $\Gamma _{\mathbf{v}}$. Denote by $r_{\mathbf{v}}$ the number of $1$'s in the word $\mathbf{v}$. The unnormalised Jones polynomial of the link with a diagram $\mathcal{D}$ is given by $$\widehat{J}(\mathcal{D})(q)=\sum _{\mathbf{v}\in \{0,1\}^{k}}(-1)^{r_{\mathbf{v}}+k_{-}}\,q^{r_{\mathbf{v}}+k_{+}-2k_{-}}\left (q+q^{-1}\right )^{c_{\mathbf{v}}}\;.$$ The ``usual'' Jones polynomial may be obtained from the Jones polynomial $J(\mathcal{D})(q)=\frac{\widehat{J}(\mathcal{D})(q)}{q+q^{-1}}$ by the substitution $q=-t^{\frac{1}{2}}$.

An edge between two vertices in the cube of smoothings is usually labelled by a word of ones and zeroes with a star at the position where the two vertices differ. Thus for example the edge joining the vertex $010$ and $011$ is labeled by $01\star$. Every edge is oriented as an arrow from the vertex with $\star =0$ to the vertex with $\star =1$. The smoothings labelled by two adjacent vertices $\mathbf{v}\stackrel{\zeta }{\to}\mathbf{v}'$ only differ inside the region of a small disc (the changing disc) around the crossing, corresponding to the $\star $ in $\zeta $, where the $0$-smoothing switches to the $1$-smoothing. The edge $\zeta $ corresponds to a cobordism $W_{\zeta }$ between the smoothings. Outside the changing disc, this cobordism is just a product of $\Gamma _{\mathbf{v}}$ with an interval, while inside the tube above the changing disc, the surface looks like a saddle between the two resolutions. 

We briefly recall the definition of the Khovanov complex $C^{\ast ,\ast}(\mathcal{D})$ of an oriented link diagram $\mathcal{D}$. Denote by $X$ a graded $\mathbb{Q}$-vector space with two basis elements $1$ and $x$, such that $\deg (1)=1$ and $\deg (x)=-1$. To each word $\mathbf{v}\in \{0,1\}^{k}$, we associate the vector space $$X_{\mathbf{v}}=X^{\otimes c_{\mathbf{v}}}\{r_{\mathbf{v}}+k_{+}-2k_{-}\}\;,$$ where $F\{m\}$ denotes the grading shift that lowers gradings of all elements in a graded vector space $F$ by an integer $m$. Then define $$C^{i,\ast }(\mathcal{D})=\oplus _{\stackrel{\mathbf{v}\in \{0,1\}^{k}}{r_{\mathbf{v}}=i+k_{-}}}X_{\mathbf{v}}\;.$$ 

Thus, every smoothing $\Gamma _{\mathbf{v}}$ in the cube of smoothings has an associated graded vector space $X_{\mathbf{v}}$, and the space $C^{i,\ast}(\mathcal{D})$ is the direct sum of all vector spaces in the column $i+k_{-}$ of the cube. Every element of $C^{i,j}(\mathcal{D})$ has two gradings: the \emph{homological grading} $i$ and the $q$-\emph{grading} $j$. If $c\in C^{i,j}(\mathcal{D})$ is an element whose degree in $X_{\mathbf{v}}$ equals $\deg (c)$, then $i=r_{\mathbf{v}}-k_{-}$  and $j=\deg(c)+i+k_{+}-k_{-}$. 

To every smoothing $\Gamma _{v}$ inside the cube of smoothings, we have associated the vector space $X_{\mathbf{v}}$. To a cobordism $W_{\zeta }$, corresponding to an edge $\mathbf{v}\stackrel{\zeta}{\rightarrow}\mathbf{v}'$, we associate a linear map $d_{\zeta }\colon X_{\mathbf{v}}\to X_{\mathbf{v}'}$. This map restricts to the identity map for any part of $W_{\zeta}$ that is a trivial (product) cobordism, while a pair of pants surface (a cobordism between one circle and a pair of circles in either order) gives rise to two linear maps $m\colon X\otimes X\to X$ and $\Delta \colon X\to X\otimes X$. These are defined by 
\begin{xalignat*}{1}
m(1\otimes 1)=1\,,\quad m(1\otimes x)=&m(x\otimes 1)=x\,,\quad m(x\otimes x)=0,\\
\Delta (1)=1\otimes x+x\otimes 1\,, &\quad \Delta (x)=x\otimes x\;.
\end{xalignat*}  
Every edge $\zeta $ in the cube of smoothings has a sign, given by $$\textrm{sign}(\zeta)=(-1)^{\textrm{number of 1's to the left of $\star$ in $\zeta$}}\;.$$ To define the differential $d^{i}\colon C^{i,\ast}(\mathcal{D})\to C^{i+1,\ast}(\mathcal{D})$, we set $$d^{i}(c)=\sum _{Tail (\zeta )=\mathbf{v}}\textrm{sign}(\zeta )d_{\zeta }(c)$$
for any $c\in X_{\mathbf{v}}\subset C^{i,\ast}(\mathcal{D})$, and extend by linearity. The \emph{Khovanov homology} of an oriented link diagram $\mathcal{D}$ is the homology of this complex: $$KH^{\ast ,\ast}(\mathcal{D})=H(C^{\ast ,\ast}\left (\mathcal{D}),d\right )\;.$$ It turns out this homology is a fine link invariant. Its Euler characteristic is precisely the unnormalised Jones polynomial, as the following proposition states:

\begin{proposition} For any link diagram $\mathcal{D}$, we have $\sum (-1)^{i}qdim(KH^{i,\ast}(\mathcal{D}))=\widehat{J}(\mathcal{D})$. 
\end{proposition}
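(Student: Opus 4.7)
The plan is to use the standard fact that for a bounded complex of finite-dimensional graded vector spaces, the graded Euler characteristic of the complex equals the graded Euler characteristic of its homology. Since $\mathcal{D}$ has finitely many crossings, $C^{*,*}(\mathcal{D})$ is a finite direct sum of finite-dimensional pieces $X_{\mathbf{v}}$, so this general principle applies and gives
\[
\sum_i (-1)^i \mathrm{qdim}\bigl(KH^{i,*}(\mathcal{D})\bigr) \;=\; \sum_i (-1)^i \mathrm{qdim}\bigl(C^{i,*}(\mathcal{D})\bigr).
\]

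Next, I would compute the right-hand side directly from the construction. Because $\mathrm{qdim}(X)=q+q^{-1}$, the tensor power satisfies $\mathrm{qdim}(X^{\otimes c_{\mathbf{v}}}) = (q+q^{-1})^{c_{\mathbf{v}}}$, and the degree shift by $\{r_{\mathbf{v}}+k_+-2k_-\}$ multiplies the graded dimension by $q^{r_{\mathbf{v}}+k_+-2k_-}$. Hence
\[
\mathrm{qdim}(X_{\mathbf{v}}) \;=\; q^{r_{\mathbf{v}}+k_+-2k_-}(q+q^{-1})^{c_{\mathbf{v}}}.
\]
Using the decomposition $C^{i,*}(\mathcal{D}) = \bigoplus_{r_{\mathbf{v}}=i+k_-} X_{\mathbf{v}}$ and reindexing the double sum by the single variable $\mathbf{v}\in\{0,1\}^k$ (with $i=r_{\mathbf{v}}-k_-$, so $(-1)^i=(-1)^{r_{\mathbf{v}}+k_-}$), I obtain
\[
\sum_i (-1)^i \mathrm{qdim}\bigl(C^{i,*}(\mathcal{D})\bigr) \;=\; \sum_{\mathbf{v}\in\{0,1\}^k} (-1)^{r_{\mathbf{v}}+k_-}\, q^{r_{\mathbf{v}}+k_+-2k_-}(q+q^{-1})^{c_{\mathbf{v}}},
\]
which is exactly $\widehat{J}(\mathcal{D})(q)$ by definition.

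The only nontrivial ingredient is the Euler-characteristic invariance, which follows from the short exact sequences $0\to Z^i \to C^i \to B^{i+1}\to 0$ and $0\to B^i\to Z^i\to H^i\to 0$ in each $q$-degree separately (the differential $d$ preserves the $q$-grading because every building block of $d_{\zeta}$ — identity maps, the product $m$, and the coproduct $\Delta$ — is of degree $0$ with respect to $q$-grading once the $\{+1\}$ shift for each edge is taken into account). Verifying this grading behaviour of $m$ and $\Delta$ is the one technical point worth pausing on; everything else reduces to manipulating the generating function above.
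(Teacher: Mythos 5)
Your argument is correct. Note that the paper itself offers no proof of this proposition --- it is stated as a known fact about Khovanov homology (with the background deferred to Bar-Natan and Turner) --- so there is nothing to compare against; your proof is the standard one: invariance of the graded Euler characteristic under passing to homology (via the short exact sequences for cycles and boundaries, using that $d$ preserves the $q$-grading), followed by the direct computation $qdim(X_{\mathbf{v}})=q^{r_{\mathbf{v}}+k_{+}-2k_{-}}(q+q^{-1})^{c_{\mathbf{v}}}$ and reindexing the sum over $\mathbf{v}\in\{0,1\}^{k}$, which reproduces the paper's defining formula for $\widehat{J}(\mathcal{D})$ exactly.
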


\section{A discrete structure on the cube of smoothings} \label{sec2}

\subsection{Triangulated diagrams and webs} \label{subs21}

Let $\mathcal{D}$ be a diagram of a link $L$ with $k$ crossings. Observe that $\mathcal{D}$ is a 4-valent graph whose vertices correspond to crossings. Denote by $E(\mathcal{D})$ the set of edges of this graph, let $V=\{1,2,\ldots ,2k\}$ and choose a bijection $\tau\colon E(\mathcal{D})\to V$. A barycentric subdivision of the diagram $\mathcal{D}$ adds a new vertex in the center of every edge $e\in E(D)$; we label this subdivision vertex by $\tau (e)\in V$. The resulting labeled barycentric subdivision of $\mathcal{D}$ will be called a \emph{triangulation} of the diagram $\mathcal{D}$. A link diagram $\mathcal{D}$ with a chosen triangulation $\tau $ will be called a \emph{triangulated link diagram} $(\mathcal{D},\tau )$.   

\begin{figure}[H]
\begin{tikzpicture}
\matrix [column sep=1.5cm]
{
\node (a) {\begin{tikzpicture}
\draw[black, thick] (-0.07738,0.9064) arc (65:413:1); 
\draw[black, thick] (0.12539,-0.9271839) arc (-112:233:1); 
\filldraw[black] (-1.5,0) circle (2pt) node[anchor=east]{$1$};
\filldraw[black] (-0.5,0) circle (2pt) node[anchor=east]{$2$};
\filldraw[black] (1.5,0) circle (2pt) node[anchor=west]{$4$};
\filldraw[black] (0.5,0) circle (2pt) node[anchor=west]{$3$};
\end{tikzpicture}}; & \node (b) {\begin{tikzpicture}
\filldraw[black] (-5,0) circle (2pt) node[anchor=east]{$1$};
\filldraw[black] (-4,0) circle (2pt) node[anchor=east]{$2$};
\filldraw[black] (-2.5,0) circle (2pt) node[anchor=west]{$4$};
\filldraw[black] (-3.5,0) circle (2pt) node[anchor=west]{$3$};
\draw[black, thick] (-4.5,0) circle (0.5cm);
\draw[black, thick][-] (-3,0) circle (0.5cm);
\filldraw[blue] (-3.75,0.6) node[anchor=south]{$00$};
\filldraw[black] (-1.25,1.5) circle (2pt) node[anchor=east]{$1$};
\filldraw[black] (-0.25,1.5) circle (2pt) node[anchor=east]{$2$};
\filldraw[black] (1.25,1.5) circle (2pt) node[anchor=west]{$4$};
\filldraw[black] (0.25,1.5) circle (2pt) node[anchor=west]{$3$};
\filldraw[blue] (0,2) node[anchor=south]{$01$};
\filldraw[blue] (0,-2.4) node[anchor=south]{$10$};
\draw[black, thick] (-1.25,1.5) .. controls (-1.1,0.3) and (1.1,0.3) .. (1.25,1.5);
\draw[black, thick] (-1.25,1.5) .. controls (-1,2) and (-0.5,2) .. (-0.25,1.5);
\draw[black, thick] (1.25,1.5) .. controls (1,2) and (0.5,2) .. (0.25,1.5);
\draw[black, thick] (-0.25,1.5) .. controls (-0.15,1.2) and (0.15,1.2) .. (0.25,1.5);
\filldraw[black] (-1.25,-1.5) circle (2pt) node[anchor=east]{$1$};
\filldraw[black] (-0.25,-1.5) circle (2pt) node[anchor=east]{$2$};
\filldraw[black] (1.25,-1.5) circle (2pt) node[anchor=west]{$4$};
\filldraw[black] (0.25,-1.5) circle (2pt) node[anchor=west]{$3$};
\draw[black, thick] (-1.25,-1.5) .. controls (-1.1,-0.3) and (1.1,-0.3) .. (1.25,-1.5);
\draw[black, thick] (-1.25,-1.5) .. controls (-1,-2) and (-0.5,-2) .. (-0.25,-1.5);
\draw[black, thick] (1.25,-1.5) .. controls (1,-2) and (0.5,-2) .. (0.25,-1.5);
\draw[black, thick] (-0.25,-1.5) .. controls (-0.15,-1.2) and (0.15,-1.2) .. (0.25,-1.5);
\filldraw[black] (2.5,0) circle (2pt) node[anchor=east]{$1$};
\filldraw[black] (3.15,0) circle (2pt) node[anchor=east]{$2$};
\filldraw[black] (4.5,0) circle (2pt) node[anchor=west]{$4$};
\filldraw[black] (3.85,0) circle (2pt) node[anchor=west]{$3$};
\filldraw[blue] (3.5,0.8) node[anchor=south]{$11$};
\draw[black, thick][-] (2.5,0).. controls (2.7,1) and (4.3,1) .. (4.5,0);         
\draw[black, thick][-] (2.5,0).. controls (2.7,-1) and (4.3,-1) .. (4.5,0);     
\draw[black, thick][-] (3.5,0) circle (10pt);   
\draw[blue, thick][->] (-2,0.5)--(-1.5,1);         
\draw[blue, thick][->] (-2,-0.5)--(-1.5,-1);       
\draw[blue, thick][->] (1.5,1)--(2,0.5);       
\draw[blue, thick][->] (1.5,-1)--(2,-0.5);                 
\end{tikzpicture}};\\
};
\end{tikzpicture}
\caption{A triangulated link diagram (left) and its cube of smoothings (right)}
\label{fig3}
\end{figure}

A triangulation of a link diagram $\mathcal{D}$ induces a graph structure on every smoothing of $\mathcal{D}$ (see Figure \ref{fig3}). Each smoothing $\Gamma _{\mathbf{v}}$ becomes a graph $(V,E_{\mathbf{v}})$ that is not simple in general (it may have double edges). For any $\mathbf{v}\in \{0,1\}^{k}$, connected components of the graph $\Gamma _{\mathbf{v}}$ induce a partition $\mathcal{P}_{\mathbf{v}}$ of the vertex set $V$. An edge between two smoothings corresponds to a cobordism, given by band addition. All edges, whose labels carry a star at the $l$-th position, correspond to addition of the same rectangular band that resolves the $l$-th crossing of $\mathcal{D}$, see Figure \ref{fig4}. An edge $\mathbf{v}\stackrel{\zeta}{\rightarrow}\mathbf{v}'$ transforms the graph $(V,E_{\mathbf{v}})$ into  $(V,E_{\mathbf{v}'})$ by surgery along a band which either merges two sets of the partition $\mathcal{P}_{\mathbf{v}}$ or splits a set of $\mathcal{P}_{\mathbf{v}}$ into a disjoint union of two subsets. 

\begin{figure}[H]
\begin{tikzpicture}
\matrix
{
\node {\begin{tikzpicture}[scale=0.50]
\draw[black, dashed] (0,0) circle (40pt); 
\draw[black, thick][-] (-1,-1) -- (1,1);
\draw[black, thick] (-1,1) -- (-0.1,0.1);
\draw[black, thick][-] (0.1,-0.1) -- (1,-1);
\filldraw[black] (-1,1) circle (2pt) node[anchor=east]{$v$};
\filldraw[black] (1,1) circle (2pt) node[anchor=west]{$j$};
\filldraw[black] (-1,-1) circle (2pt) node[anchor=east]{$i$};
\filldraw[black] (1,-1) circle (2pt) node[anchor=west]{$w$};
\end{tikzpicture}}; & \node {\begin{tikzpicture}[scale=0.50]
\draw[black, dashed] (0,0) circle (40pt); 
\draw[black, thick][-] (-1,-1) .. controls (-0.3,0) .. (-1,1) ;
\draw[black, thick][-] (1,-1) .. controls (0.3,0) .. (1,1);
\filldraw[black] (-1,1) circle (2pt) node[anchor=east]{$v$};
\filldraw[black] (1,1) circle (2pt) node[anchor=west]{$j$};
\filldraw[black] (-1,-1) circle (2pt) node[anchor=east]{$i$};
\filldraw[black] (1,-1) circle (2pt) node[anchor=west]{$w$};
\end{tikzpicture}}; 
& \node {\begin{tikzpicture}[scale=0.50]
\fill[blue!40!white,draw=black] (-1,-1.5) rectangle (1,-0.5); 
\draw[blue, thick][->] (-1,0) -- (1,0);
\filldraw[black] (-1,-1.5) circle (2pt) node[anchor=east]{$i$};
\filldraw[black] (-1,-0.5) circle (2pt) node[anchor=east]{$v$};
\filldraw[black] (1,-0.5) circle (2pt) node[anchor=west]{$j$};
\filldraw[black] (1,-1.5) circle (2pt) node[anchor=west]{$w$};
\end{tikzpicture}};
& \node {\begin{tikzpicture}[scale=0.50]
\draw[black, dashed] (0,0) circle (40pt); 
\draw[black, thick][-] (-1,1) .. controls (0,0.3) .. (1,1) ;
\draw[black, thick][-] (-1,-1) .. controls (0,-0.3) .. (1,-1);
\filldraw[black] (-1,1) circle (2pt) node[anchor=east]{$v$};
\filldraw[black] (1,1) circle (2pt) node[anchor=west]{$j$};
\filldraw[black] (-1,-1) circle (2pt) node[anchor=east]{$i$};
\filldraw[black] (1,-1) circle (2pt) node[anchor=west]{$w$};
\end{tikzpicture}}; \\
};
\end{tikzpicture}
\caption{A crossing of $\mathcal{D}$ (left) and the band, connecting its two resolutions (right)}
\label{fig4}
\end{figure}

Denote by $\mathbf{0}\in \{0,1\}^{k}$ (resp. $\mathbf{1}\in \{0,1\}^{k}$) a $k$-tuple of zeroes (resp. ones). Consider the graph $\Gamma=(V,E_{\mathbf{0}}\cup E_{\mathbf{1}})$ with a coloring function on edges $\gamma \colon E_{\mathbf{0}}\cup E_{\mathbf{1}}\to \{0,1\}$, defined by $$\gamma (e)=\begin{cases} 0 & \textrm{ if $e\in E_{\mathbf{0}}$\;,}\\ 1 & \textrm{ if $e\in E_{\mathbf{1}}$\;.}\end{cases}$$ 
The colored graph $(\Gamma ,\gamma )$ will be called the \emph{web} of the triangulated diagram $(\mathcal{D},\tau )$. Observe that $\Gamma $ is a regular 4-valent planar graph; we denote by $\Sigma $ its ambient plane. The closure of a connected component of $\Sigma -\Gamma $ is called a \emph{region}. Observe that $k$ of these regions are bands, whose boundary contains a pair of opposite edges of color $0$ and another pair of color $1$. 

\begin{lemma} \label{lemma1} An orientation of the plane $\Sigma $ induces a uniquely defined orientation of every smoothing of $\mathcal{D}$.  
\end{lemma}
\begin{proof} Let $(\Gamma ,\gamma )$ be the web of a triangulated diagram $\mathcal{D}$, and fix an orientation of its ambient plane $\Sigma $. Let us denote by $b_{i}$ the band region that corresponds to the $i$-th crossing of the diagram $\mathcal{D}$ for $i=1,2,\ldots ,k$. Every smoothing $\Gamma _{\mathbf{v}}$ is the boundary of a compact subset $R_{\mathbf{v}}\subset \Sigma $ that is a union of regions. For any $\mathbf{v}\in \{0,1\}^{k}$, the set $R_{\mathbf{v}}$ is obtained from $R_{\mathbf{0}}$ by surgery along all bands $b_{i}$, for which $\mathbf{v}_{i}=1$. Each component of $R_{\mathbf{v}}$ inherits the orientation of the plane $\Sigma $, which uniquely defines an orientation of its boundary $\Gamma _{\mathbf{v}}$. 
\end{proof}

A collection of orientations of all smoothings $\Gamma _{\mathbf{v}}$ for $\mathbf{v}\in \{0,1\}^{k}$, induced by a fixed orientation of $\Sigma $ as in Lemma \ref{lemma1}, will be called a \emph{web orientation}. Observe that the cube of smoothings allows two web orientations. 

\begin{figure}[H]
\begin{tikzpicture}
\matrix [column sep=1.5cm]
{
\node (a) {\begin{tikzpicture}
\draw[black, thick] (-0.07738,0.9064) arc (65:413:1); 
\draw[black, thick] (0.12539,-0.9271839) arc (-112:233:1); 
\filldraw[black] (-1.5,0) circle (2pt) node[anchor=east]{$1$};
\filldraw[black] (-0.5,0) circle (2pt) node[anchor=east]{$2$};
\filldraw[black] (1.5,0) circle (2pt) node[anchor=west]{$4$};
\filldraw[black] (0.5,0) circle (2pt) node[anchor=west]{$3$};
\end{tikzpicture}}; & \node (b) {\begin{tikzpicture}
\filldraw[black] (-1.5,0) circle (2pt) node[anchor=east]{$1$};
\filldraw[black] (-0.5,0) circle (2pt) node[anchor=east]{$2$};
\filldraw[black] (1.5,0) circle (2pt) node[anchor=west]{$4$};
\filldraw[black] (0.5,0) circle (2pt) node[anchor=west]{$3$};
\draw[blue, thick] (-1,0) circle (0.5cm);
\draw[blue, thick][-] (1,0) circle (0.5cm);
\draw[red, thick][-] (-1.5,0).. controls (-1.5,1.6) and (1.5,1.6) .. (1.5,0); 
\draw[red, thick][-] (-1.5,0).. controls (-1.5,-1.6) and (1.5,-1.6) .. (1.5,0); 
\draw[red, thick][-] (0,0) circle (0.5cm);                    
\end{tikzpicture}};\\
};
\end{tikzpicture}
\caption{A triangulated link diagram (left) and its corresponding web (right).}
\label{fig5}
\end{figure}

\begin{example} Figure \ref{fig5} shows the web of a triangulated diagram of the Hopf link. 
\end{example}

In order to search for link invariants, arising from webs of triangulated diagrams, we need to study invariance of this construction. Our choice of the labeling function $\tau \colon E(\mathcal{D})\to V$ could have been replaced by any other bijection, thus a permutation of  labels should result in the same value of any possible invariant. 

\begin{figure}[H]
\begin{tikzpicture}
\matrix [column sep=1cm]
{
\node (c) {\begin{tikzpicture}
\draw[black, dashed] (0,0) circle (40pt);
\draw[black, thick][-] (-1,1) .. controls (0.3,0.5) and (0.7,-0.2) .. (0,-0.4) 
                                           .. controls (-0.7,-0.2) and (-0.3,0.2) .. (0,0.4);
\draw[black,thick][-] (0.1,0.45) .. controls (0.5,0.7) .. (1,1);
\filldraw[black] (0,-0.4) circle (2pt) node[anchor=north]{$a$};
\filldraw[black] (-0.5,0.76) circle (2pt) node[anchor=south]{$i$};
\filldraw[black] (0.5,0.7) circle (2pt) node[anchor=south]{$b$};
\end{tikzpicture}}; & 
\node (d) {\begin{tikzpicture}[scale=0.80]
\draw[black, dashed] (0,0) circle (40pt);
\draw[black, thick][-] (-1,1) .. controls (-0.3,0) and (0.3,0) .. (1,1) ;
\filldraw[black] (-0.5,0.44) circle (2pt) node[anchor=south]{$i$};
\end{tikzpicture}}; & 
\node (j) {\begin{tikzpicture}
\draw[black, dashed] (0,0) circle (40pt);
\draw[black, thick][-] (1,1) .. controls (-0.3,0.5) and (-0.7,-0.2) .. (0,-0.4) 
                                           .. controls (0.7,-0.2) and (0.3,0.2) .. (0,0.4);
\draw[black,thick][-] (-0.1,0.45) .. controls (-0.5,0.7) .. (-1,1);
\filldraw[black] (0,-0.4) circle (2pt) node[anchor=north]{$a$};
\filldraw[black] (-0.55,0.74) circle (2pt) node[anchor=south]{$i$};
\filldraw[black] (0.5,0.77) circle (2pt) node[anchor=south]{$b$};
\end{tikzpicture}}; \\
\node (e) {\begin{tikzpicture}
\draw[black, dashed] (0,0) circle (40pt);
\draw[black, thick][-] (-1,1) .. controls (0.6,0.4) and (0.6,-0.4) .. (-1,-1) ;
\draw[black, thick][-] (1,1) .. controls (0.5,0.9) and (0.3,0.6) .. (0,0.5); 
\draw[black,thick][-] (1,-1) .. controls (0.5,-0.9) and (0.3,-0.6) .. (0,-0.5); 
\draw[black,thick][-] (-0.1,-0.45) .. controls (-0.7,-0.35) and (-0.7,0.35) .. (-0.1,0.45);
\filldraw[black] (0.5,-0.79) circle (2pt) node[anchor=north]{$d$};
\filldraw[black] (0.5,0.79) circle (2pt) node[anchor=south]{$b$};
\filldraw[black] (-0.5,-0.77) circle (2pt) node[anchor=north]{$c$};
\filldraw[black] (-0.5,0.77) circle (2pt) node[anchor=south]{$a$};
\filldraw[black] (-0.53,0) circle (2pt) node[anchor=east]{$i$};
\filldraw[black] (0.2,0) circle (2pt) node[anchor=west]{$j$};
\end{tikzpicture}}; &
\node (f) {\begin{tikzpicture}
\draw[black, dashed] (0,0) circle (40pt);
\draw[black, thick][-] (-1,1) .. controls (-0.15,0.5) and (-0.15,-0.5) .. (-1,-1) ;
\draw[black, thick][-] (1,1) .. controls (0.15,0.5) and (0.15,-0.5) .. (1,-1) ;
\filldraw[black] (-0.35,0) circle (2pt) node[anchor=east]{$j$};
\filldraw[black] (0.35,0) circle (2pt) node[anchor=west]{$i$};
\end{tikzpicture}}; &
\node (g) {\begin{tikzpicture}
\draw[black, dashed] (0,0) circle (40pt);
\draw[black, thick][-] (1,1) .. controls (-0.6,0.4) and (-0.6,-0.4) .. (1,-1) ;
\draw[black, thick][-] (-1,1) .. controls (-0.5,0.9) and (-0.3,0.6) .. (0,0.5); 
\draw[black,thick][-] (-1,-1) .. controls (-0.5,-0.9) and (-0.3,-0.6) .. (0,-0.5); 
\draw[black,thick][-] (0.1,-0.45) .. controls (0.7,-0.35) and (0.7,0.35) .. (0.1,0.45);
\filldraw[black] (0.5,-0.79) circle (2pt) node[anchor=north]{$d$};
\filldraw[black] (0.5,0.79) circle (2pt) node[anchor=south]{$b$};
\filldraw[black] (-0.5,-0.77) circle (2pt) node[anchor=north]{$c$};
\filldraw[black] (-0.5,0.8) circle (2pt) node[anchor=south]{$a$};
\filldraw[black] (-0.2,0) circle (2pt) node[anchor=east]{$i$};
\filldraw[black] (0.55,0) circle (2pt) node[anchor=west]{$j$};
\end{tikzpicture}}; \\
\node (a) {\begin{tikzpicture}
\draw[black, dashed] (0,0) circle (40pt);
\draw[black, thick][-] (-0.7,1.2) .. controls (-0.6,0.2) and (-0.2,0) .. (0.7,-1.24);
\draw[black, thick][-] (-1.4,0.1) .. controls (-0.9,0.2) .. (-0.55,0.4);
\draw[black, thick][-] (-0.44,0.48) .. controls (0,0.8) .. (0.48,0.46); 
\draw[black, thick][-] (1.4,0.1) .. controls (0.9,0.2) .. (0.6,0.4);             
\draw[black, thick][-] (-0.7,-1.2) ..controls (-0.5,-0.8) .. (0,-0.4);     
\draw[black, thick][-] (0.1,-0.35) .. controls (0.3,-0.2) and (0.5,0) .. (0.7,1.2);                         
\filldraw[black] (-0.54,-0.9) circle (2pt) node[anchor=east]{$a$};
\filldraw[black] (0.47,-0.9) circle (2pt) node[anchor=west]{$b$};
\filldraw[black] (1.05,0.15) circle (2pt) node[anchor=north]{$c$};
\filldraw[black] (0.64,0.85) circle (2pt) node[anchor=east]{$d$};
\filldraw[black] (-1.05,0.18) circle (2pt) node[anchor=north]{$f$};
\filldraw[black] (-0.64,0.85) circle (2pt) node[anchor=west]{$e$};
\filldraw[black] (-0.25,0) circle (2pt) node[anchor=east]{$g$};
\filldraw[black] (0.4,0) circle (2pt) node[anchor=west]{$h$};
\filldraw[black] (0,0.7) circle (2pt) node[anchor=north]{$i$};
\end{tikzpicture}}; & 
\node (b) {\begin{tikzpicture}
\draw[black, dashed] (0,0) circle (40pt);
\draw[black, thick][-] (-0.7,1.2) .. controls (-0.6,0.2) and (0.6,-0.2) .. (0.7,-1.24);
\draw[black, thick][-] (-1.4,0.1) .. controls (-0.9,-0.2) .. (-0.55,-0.4);
\draw[black, thick][-] (-0.44,-0.48) .. controls (0,-0.8) .. (0.38,-0.54); 
\draw[black, thick][-] (1.4,0.1) .. controls (0.9,-0.2) .. (0.47,-0.47);             
\draw[black, thick][-] (-0.7,-1.2) ..controls (-0.6,-0.5) .. (-0.08,0);     
\draw[black, thick][-] (0.05,0.05) .. controls (0.5,0.4) .. (0.7,1.2);                         
\filldraw[black] (-0.65,-0.9) circle (2pt) node[anchor=east]{$a$};
\filldraw[black] (0.63,-0.9) circle (2pt) node[anchor=west]{$b$};
\filldraw[black] (1.05,-0.1) circle (2pt) node[anchor=south]{$c$};
\filldraw[black] (0.6,0.85) circle (2pt) node[anchor=east]{$d$};
\filldraw[black] (-1.05,-0.1) circle (2pt) node[anchor=south]{$f$};
\filldraw[black] (-0.6,0.85) circle (2pt) node[anchor=west]{$e$};
\filldraw[black] (-0.3,-0.2) circle (2pt) node[anchor=east]{$h$};
\filldraw[black] (0.2,-0.2) circle (2pt) node[anchor=west]{$g$};
\filldraw[black] (0,-0.73) circle (2pt) node[anchor=north]{$i$};
\end{tikzpicture}};\\
};
\draw [<->,blue,thick] (c.east) -- (d.west) node [above,midway] {$R_{1}$};
\draw [<->,blue,thick] (d.east) -- (j.west) node [above,midway] {$R_{1}'$};
\draw [<->,blue,thick] (e.east) -- (f.west) node [above,midway] {$R_2$};
\draw [<->,blue,thick] (f.east) -- (g.west) node [above,midway] {$R_{2}'$};
\draw [<->,blue,thick] (a.east) -- (b.west) node [above,midway] {$R_{3}$};
\end{tikzpicture}
\caption{The triangulated Reidemeister moves}
\label{fig11}
\end{figure}

Two diagrams of equivalent links are connected by a finite sequence of plane isotopies and Reidemeister moves, see Figure \ref{fig11}. During the first Reidemeister move of a triangulated diagram $(\mathcal{D},\tau )$, a new crossing appears, which gives rise to a new pair of labeled vertices in the barycentric subdivision. Figure \ref{fig6} shows the resulting pair of local smoothings (which of them is the $0$-smoothing, depends on the type of move - $R_1$ or $R_{1}'$). During the second Reidemeister move, a pair of new crossings between the edges labeled by $i$ and $j$ give rise to a quadruple of new vertices $a$, $b$, $c$, $d$; the resulting local smoothing is given in Figure \ref{fig8}. Two local smoothings of a triangulated diagram, connected by the third Reidemeister move, are shown in Figure \ref{fig9}. 

Figure \ref{fig7} shows the set of moves on the web of a triangulated diagram $(\mathcal{D},\tau)$, induced by the Reidemeister moves of $\mathcal{D}$.

\begin{figure}[H]
\begin{tikzpicture}
\matrix [column sep=1cm]
{
\node (b) {\begin{tikzpicture}
\draw[black, dashed] (0,0) circle (40pt);
\draw[red, thick] (0,-0.16) circle (6pt);
\draw[blue, thick][-] (-1,1) .. controls (-0.1,0.1) and (-0.7,-0.2) .. (0,-0.4) 
                                           .. controls (0.7,-0.2) and (0.1,0.1) .. (1,1) ;
\draw[red, thick][-] (-1.3,-0.5) .. controls (-0.3,1.11) and (0.3,1.11) .. (1.3,-0.5) ;
\filldraw[black] (0,-0.37) circle (2pt) node[anchor=north]{$a$};
\filldraw[black] (-0.56,0.44) circle (2pt) node[anchor=south]{$i$};
\filldraw[black] (0.56,0.44) circle (2pt) node[anchor=south]{$b$};
\end{tikzpicture}}; & \node (c) {\begin{tikzpicture}
\draw[black, dashed] (0,0) circle (40pt);
\draw[blue, thick][-] (-1,1) .. controls (-0.3,0) and (0.3,0) .. (1,1) ;
\draw[red, thick][-] (-1.3,-0.5) .. controls (-0.3,0.47) and (0.3,0.47) .. (1.3,-0.5) ;
\filldraw[black] (0,0.25) circle (2pt) node[anchor=south]{$i$};
\end{tikzpicture}}; 
& \node (d) {\begin{tikzpicture}
\draw[black, dashed] (0,0) circle (40pt);
\draw[red, thick][-] (-1,1) .. controls (-0.3,0) and (0.3,0) .. (1,1) ;
\draw[blue, thick][-] (-1.3,-0.5) .. controls (-0.3,0.47) and (0.3,0.47) .. (1.3,-0.5) ;
\filldraw[black] (0,0.25) circle (2pt) node[anchor=south]{$i$};
\end{tikzpicture}}; 
& \node (e) {\begin{tikzpicture}
\draw[black, dashed] (0,0) circle (40pt);
\draw[blue, thick] (0,-0.16) circle (6pt);
\draw[red, thick][-] (-1,1) .. controls (-0.1,0.1) and (-0.7,-0.2) .. (0,-0.4) 
                                           .. controls (0.7,-0.2) and (0.1,0.1) .. (1,1) ;
\draw[blue, thick][-] (-1.3,-0.5) .. controls (-0.3,1.11) and (0.3,1.11) .. (1.3,-0.5) ;
\filldraw[black] (0,-0.37) circle (2pt) node[anchor=north]{$a$};
\filldraw[black] (-0.56,0.44) circle (2pt) node[anchor=south]{$i$};
\filldraw[black] (0.56,0.44) circle (2pt) node[anchor=south]{$b$};
\end{tikzpicture}}; \\
\node (f) {\begin{tikzpicture}
\draw[black, dashed] (0,0) circle (40pt);
\draw[blue, thick][-] (-1,1) .. controls (0,0.7) and (-0.7,-0.2) .. (0,-0.3) 
                                           .. controls (0.7,-0.2) and (0,0.7) .. (1,1);
\draw[blue, thick][-] (-1,-1) .. controls (-0.3,-0.4) and (0.3,-0.4) .. (1,-1);
\draw[red, thick][-] (-1.4,0.1) .. controls (-0.2,1) and (0.2,1) .. (1.4,0.1);
\draw[red, thick][-] (-0.5,-0.68) .. controls (-0.2,-0.7) and (-0.7,0.2) .. (0,0.3) 
                                           .. controls (0.7,0.2) and (0.2,-0.7) .. (0.5,-0.68);
\draw[red, thick][-] (-0.5,-0.68)  .. controls (-0.8,-0.7) .. (-1.35,-0.3);
\draw[red, thick][-] (0.5,-0.68)  .. controls (0.8,-0.7) .. (1.35,-0.3);
\filldraw[black] (0.5,-0.68) circle (2pt) node[anchor=north]{$d$};
\filldraw[black] (0.5,0.68) circle (2pt) node[anchor=south]{$b$};
\filldraw[black] (-0.5,-0.68) circle (2pt) node[anchor=north]{$c$};
\filldraw[black] (-0.5,0.68) circle (2pt) node[anchor=south]{$a$};
\filldraw[black] (-0.35,0) circle (2pt) node[anchor=east]{$i$};
\filldraw[black] (0.35,0) circle (2pt) node[anchor=west]{$j$};
\end{tikzpicture}}; & 
\node (g) {\begin{tikzpicture}
\draw[black, dashed] (0,0) circle (40pt);
\draw[blue, thick][-] (-1,1) .. controls (-0.15,0.5) and (-0.15,-0.5) .. (-1,-1) ;
\draw[blue, thick][-] (1,1) .. controls (0.15,0.5) and (0.15,-0.5) .. (1,-1) ;
\draw[red, thick][-] (-1.3,0.5) .. controls (-0.08,0.5) and (-0.08,-0.5) .. (-1.3,-0.5);
\draw[red, thick][-] (1.3,0.5) .. controls (0.08,0.5) and (0.08,-0.5) .. (1.3,-0.5);
\filldraw[black] (-0.35,0) circle (2pt) node[anchor=east]{$i$};
\filldraw[black] (0.35,0) circle (2pt) node[anchor=west]{$j$};
\end{tikzpicture}}; &
\node (h) {\begin{tikzpicture}
\draw[black, dashed] (0,0) circle (40pt);
\draw[red, thick][-] (-1,1) .. controls (0,0.7) and (-0.7,-0.2) .. (0,-0.3) 
                                           .. controls (0.7,-0.2) and (0,0.7) .. (1,1);
\draw[red, thick][-] (-1,-1) .. controls (-0.3,-0.4) and (0.3,-0.4) .. (1,-1);
\draw[blue, thick][-] (-1.4,0.1) .. controls (-0.2,1) and (0.2,1) .. (1.4,0.1);
\draw[blue, thick][-] (-0.5,-0.68) .. controls (-0.2,-0.7) and (-0.7,0.2) .. (0,0.3) 
                                           .. controls (0.7,0.2) and (0.2,-0.7) .. (0.5,-0.68);
\draw[blue, thick][-] (-0.5,-0.68)  .. controls (-0.8,-0.7) .. (-1.35,-0.3);
\draw[blue, thick][-] (0.5,-0.68)  .. controls (0.8,-0.7) .. (1.35,-0.3);
\filldraw[black] (0.5,-0.68) circle (2pt) node[anchor=north]{$d$};
\filldraw[black] (0.5,0.68) circle (2pt) node[anchor=south]{$b$};
\filldraw[black] (-0.5,-0.68) circle (2pt) node[anchor=north]{$c$};
\filldraw[black] (-0.5,0.68) circle (2pt) node[anchor=south]{$a$};
\filldraw[black] (-0.35,0) circle (2pt) node[anchor=east]{$i$};
\filldraw[black] (0.35,0) circle (2pt) node[anchor=west]{$j$};
\end{tikzpicture}}; & 
\node (i) {\begin{tikzpicture}
\draw[black, dashed] (0,0) circle (40pt);
\draw[red, thick][-] (-1,1) .. controls (-0.15,0.5) and (-0.15,-0.5) .. (-1,-1) ;
\draw[red, thick][-] (1,1) .. controls (0.15,0.5) and (0.15,-0.5) .. (1,-1) ;
\draw[blue, thick][-] (-1.3,0.5) .. controls (-0.08,0.5) and (-0.08,-0.5) .. (-1.3,-0.5);
\draw[blue, thick][-] (1.3,0.5) .. controls (0.08,0.5) and (0.08,-0.5) .. (1.3,-0.5);
\filldraw[black] (-0.35,0) circle (2pt) node[anchor=east]{$i$};
\filldraw[black] (0.35,0) circle (2pt) node[anchor=west]{$j$};
\end{tikzpicture}}; \\
\node (j) {\begin{tikzpicture}
\draw[black, dashed] (0,0) circle (40pt);
\draw[blue, thick][-] (-0.7,-1.24) .. controls (-0.3,-0.5) and (0.3,-0.5) .. (0.7,-1.24);
\draw[blue, thick][-] (-1.4,0.1) .. controls (-0.5,0.3) and (-0.3,-0.45) .. (0.3,-0.15)
                                                 .. controls (0.3,0.6) and (-0.45,0.1) .. (-0.7,1.2);
\draw[blue, thick][-] (1.4,0.1) .. controls (0.5,0.1) and (0.5,0.6) .. (0.7,1.2);
\draw[red, thick][-] (0.6,0.85) .. controls (0.75,1) and (0.35,0.4) .. (0,0.35)
                                                    .. controls (-0.5,-0.2) and (-0.35,-0.9) .. (-0.5,-0.93);
\draw[red, thick][-] (-1.05,0.15) .. controls (-0.5,0.1) and (-0.5,0.6) .. (-0.6,0.85);
\draw[red, thick][-] (1.05,0.15) .. controls (0.5,0.3) and (0.2,-0.3) .. (0.3,-0.15)
                                                    .. controls (0.1,-0.5) and (0.55,-1) .. (0.5,-0.93); 
\filldraw[black] (-0.5,-0.93) circle (2pt) node[anchor=east]{$a$};
\filldraw[black] (0.5,-0.93) circle (2pt) node[anchor=west]{$b$};
\filldraw[black] (1.05,0.15) circle (2pt) node[anchor=north]{$c$};
\filldraw[black] (0.6,0.85) circle (2pt) node[anchor=east]{$d$};
\filldraw[black] (-1.05,0.15) circle (2pt) node[anchor=north]{$f$};
\filldraw[black] (-0.6,0.85) circle (2pt) node[anchor=west]{$e$};
\filldraw[black] (-0.3,-0.15) circle (2pt) node[anchor=north]{$g$};
\filldraw[black] (0.3,-0.15) circle (2pt) node[anchor=north]{$h$};
\filldraw[black] (0,0.35) circle (2pt) node[anchor=south]{$i$};
\end{tikzpicture}}; & 
\node (k) {\begin{tikzpicture}
\draw[black, dashed] (0,0) circle (40pt);
\draw[blue, thick][-] (-0.7,1.2) .. controls (-0.6,0.3) and (0.6,0.3) .. (0.7,1.2);                                            
\draw[blue, thick][-] (-1.4,0.1) .. controls (-0.5,0.3) and (-0.1,-0.5) .. (-0.7,-1.24);
\draw[blue, thick][-] (1.4,0.1) .. controls (0.3,0.3) and (-0.2,0.05) .. (-0.3,0.15)
                                                .. controls (-0.1,-0.4) and (0.5,-0.9) .. (0.7,-1.24);
\draw[red, thick][-] (0.6,0.85) .. controls (0.75,1) and (0.4,0.4) .. (0,-0.35)
                                                    .. controls (-0.3,-0.7) .. (-0.5,-0.93);
\draw[red, thick][-] (-1.05,0.15) .. controls (-0.9,0.2) .. (-0.3,0.15)
                                                .. controls (-0.5,0.6) .. (-0.6,0.85);
\draw[red, thick][-] (1.05,0.15) .. controls (0.5,0.3) and (0.1,-0.5) .. (0.5,-0.93);
\filldraw[black] (-0.5,-0.93) circle (2pt) node[anchor=east]{$a$};
\filldraw[black] (0.5,-0.93) circle (2pt) node[anchor=west]{$b$};
\filldraw[black] (1.05,0.15) circle (2pt) node[anchor=north]{$c$};
\filldraw[black] (0.6,0.85) circle (2pt) node[anchor=east]{$d$};
\filldraw[black] (-1.05,0.15) circle (2pt) node[anchor=north]{$f$};
\filldraw[black] (-0.6,0.85) circle (2pt) node[anchor=west]{$e$};
\filldraw[black] (0.3,0.15) circle (2pt) node[anchor=north]{$g$};
\filldraw[black] (-0.3,0.15) circle (2pt) node[anchor=east]{$h$};
\filldraw[black] (0,-0.35) circle (2pt) node[anchor=north]{$i$};
\end{tikzpicture}};\\
};
\draw [<->,blue,thick] (f.east) -- (g.west) node [above,midway] {};
\draw [<->,blue,thick] (h.east) -- (i.west) node [above,midway] {};
\draw [<->,blue,thick] (b.east) -- (c.west) node [above,midway] {};
\draw [<->,blue,thick] (d.east) -- (e.west) node [above,midway] {};
\draw [<->,blue,thick] (j.east) -- (k.west) node [above,midway] {};
\end{tikzpicture}
\caption{The web moves}
\label{fig7}
\end{figure}

\subsection{Symmetries of a triangulated cube of smoothings} \label{subs22}

In this subsection, we present a combinatorial link invariant, based on the symmetries of triangulated smoothings. 

Let $(\mathcal{D},\tau)$ be a triangulated link diagram with $k$ crossings. Recall that for any $\mathbf{v}\in \{0,1\}^{k}$, connected components of the smoothing $\Gamma _{\mathbf{v}}$ induce a partition $\mathcal{P}_{\mathbf{v}}$ of the set $V=\{1,2,\ldots ,2k\}$. Two labels $i,j\in V$ are called $\mathbf{v}$-\emph{adjacent} if they belong to the same element of $\mathcal{P}_{\mathbf{v}}$ and if they are the labels of two adjacent vertices on the corresponding circle of $\Gamma _{\mathbf{v}}$. A permutation $\sigma \in S_{2k}$ \emph{preserves} the smoothing $\Gamma _{\mathbf{v}}$ if for every $i,j\in V$ we have $$i\textrm{ and $j$ are $\mathbf{v}$-adjacent} \Leftrightarrow \sigma (i)\textrm{ and $\sigma (j)$ are $\mathbf{v}$-adjacent}\;.$$
Thus, such permutation preserves both partition $\mathcal{P}_{\mathbf{v}}$ and the adjacency of vertices. The \emph{group of the smoothing} $\Gamma _{\mathbf{v}}$ is the subgroup $G_{\mathbf{v}}$ of $S_{2k}$ that contains all permutations preserving $\Gamma _{\mathbf{v}}$. 

\begin{example} The group of triangulated smoothing $\Gamma _{00}$ in Figure \ref{fig3} is isomorphic to the dihedral group $D_{4}$ and contains the following permutations: $$G_{00}=\{\textrm{id},\cycle{1,2},\cycle{3,4},\cycle{1,2}\cycle{3,4},\cycle{1,3}\cycle{2,4},\cycle{1,4}\cycle{2,3},\cycle{1,3,2,4},\cycle{1,4,2,3}\}\;.$$
\end{example}

We denote by $Q_{k}$ the $k$-dimensional hypercube, that is a graph with the set of vertices $\{0,1\}^{k}$, whose vertices $\mathbf{v},\mathbf{v}'\in \{0,1\}^{k}$ are connected by an edge iff they differ exactly at one letter. A \emph{vertex labeling} of $Q_k$ is a function $l\colon \{0,1\}^{k}\to S$ that assigns to each vertex $\mathbf{v}\in \{0,1\}^{k}$ a label $l(\mathbf{v})$ from a set $S$.  

\begin{definition} Let $(\mathcal{D},\tau)$ be a triangulated link diagram with $k$ crossings, and let $o$ be a web orientation on its cube of smoothings. We order the components of every smoothing by the minimal vertex label in every component. The orientation $o$ of a triangulated smoothing $\Gamma _{\mathbf{v}}$ defines a permutation $\sigma _{\mathbf{v}}^{o}\in G_{\mathbf{v}}$ as follows. Let $\sigma _{\mathbf{v}}^{o}(i)$ be the label of the adjacent vertex of $i$ that follows $i$ in the oriented circle of $\Gamma _{\mathbf{v}}$, containing $i$. In case the circle contains only one vertex, we set $\sigma _{\mathbf{v}}^{o}(i)=i$. The permutation $\sigma _{\mathbf{v}}^{o}$ may be written as a product of disjoint cycles $$\sigma _{\mathbf{v}}^{o}=\prod _{j=1}^{c_{\mathbf{v}}}\lambda _{\mathbf{v},j}^{o}\;,$$ each of which acts on one component of the smoothing $\Gamma _{\mathbf{v}}$. The element $\sigma _{\mathbf{v}}^{o}\in G_{\mathbf{v}}$ will be called the \emph{permutation of the triangulated smoothing $\Gamma _{\mathbf{v}}$ with web orientation $o$}. The \emph{cube of permutation data} $\mathcal{C}(\mathcal{D})$, associated to the triangulated diagram $(\mathcal{D},\tau )$ with web orientation $o$, is the graph $Q_k$ with a vertex labeling $l\colon \{0,1\}^{k}\to S_{2k}$, given by $l(\mathbf{v})=\sigma _{\mathbf{v}}^{o}$ for every $\mathbf{v}\in \{0,1\}^{k}$. 
\end{definition} 

\begin{example} Suppose $\Sigma $ carries the usual counterclockwise orientation. The triangulated diagram in Figure \ref{fig3} gives rise to the cube of permutation data with vertex labels $\{\sigma _{00}^{o}=\cycle{1,2}\cycle{3,4}, \sigma _{01}^{o}=\cycle{1,4,3,2}, \sigma _{10}^{o}=\cycle{1,2,3,4}, \sigma _{11}^{o}=\cycle{1,4}\cycle{2,3}\}$.
\begin{figure}[H]
\begin{tikzpicture}
\filldraw[black] (-2,0) circle (2pt) node[anchor=east]{$\cycle{1,2}\cycle{3,4}$};
\filldraw[black] (0,1) circle (2pt) node[anchor=south]{$\cycle{1,4,3,2}$};
\filldraw[black] (0,-1) circle (2pt) node[anchor=north]{$\cycle{1,2,3,4}$};
\filldraw[black] (2,0) circle (2pt) node[anchor=west]{$\cycle{1,4}\cycle{2,3}$};
\draw[blue, thick][-] (-2,0)--(0,1);         
\draw[blue, thick][-] (-2,0)--(0,-1);       
\draw[blue, thick][-] (0,1)--(2,0);       
\draw[blue, thick][-] (0,-1)--(2,0);                 
\end{tikzpicture}
\label{fig12}
\caption{The cube of permutation data $\mathcal{C}(\mathcal{D})$, associated with the triangulated diagram $\mathcal{D}$ in Figure \ref{fig3}.}
\end{figure}
\end{example}

\begin{remark} As a graph, the cube of permutation data is just the ordinary hypercube, while the labels of its vertices store important information about the triangulated cube of resolutions.
\end{remark}

\begin{proposition} \label{prop1} Let $c$ be the $l$-th crossing of a triangulated diagram $(\mathcal{D},\tau)$ and denote by $e_{1},e_{2},e_{3},e_{4}$ the four edges, incident at $c$, so that $e_{1}\cup e_{2}$ represents the overcrossing arc and $e_{3}\cup e_{4}$ represents the undercrossing arc. We denote by $i=\tau (e_{1})$, $j=\tau (e_2)$, $v=\tau (e_3)$ and $w=\tau (e_4)$ the labels of these edges. Let $\mathbf{v},\mathbf{v}'\in \{0,1\}^{k}$ be two words with $\mathbf{v}_{l}=0$, $\mathbf{v}_{l}'=1$ and $\mathbf{v}_{m}=\mathbf{v}_{m}'$ for every $m\in \{1,2,\ldots ,k\}\backslash \{l\}$. For any web orientation $o$ on the cube of smoothings, we have: \begin{itemize}
\item[(a)] if $\sigma _{\mathbf{v}}^{o}(i)=v$ and $\sigma _{\mathbf{v}}^{o}(j)=w$, then $\cycle{v,w}\circ \sigma _{\mathbf{v}}^{o}=\sigma _{\mathbf{v}'}^{o}$.
\item[(b)] if $\sigma _{\mathbf{v}}^{o}(v)=i$ and $\sigma _{\mathbf{v}}^{o}(w)=j$, then $\sigma _{\mathbf{v}}^{o}\circ \cycle{v,w}=\sigma _{\mathbf{v}'}^{o}$.
\end{itemize} 
\end{proposition}
\begin{proof} The smoothing $\Gamma _{\mathbf{v}'}$ is obtained from $\Gamma _{\mathbf{v}}$ by surgery along a band with vertices $i$, $w$, $j$ and $v$, see Figure \ref{fig10}. Suppose that $\sigma _{\mathbf{v}}^{o}(i)=v$ and $\sigma _{\mathbf{v}}^{o}(j)=w$. It follows that $\left (\cycle{v,w}\circ \sigma _{\mathbf{v}}^{o}\right )(i)=w$ and $\left (\cycle{v,w}\circ \sigma _{\mathbf{v}}^{o}\right )(j)=v$, while $\left (\cycle{v,w}\circ \sigma _{\mathbf{v}}^{o}\right )(m)=\sigma _{\mathbf{v}}^{o}(m)$ for every $m\in V-\{i,j\}$. Thus, $\cycle{v,w}\circ \sigma _{\mathbf{v}}^{o}=\sigma _{\mathbf{v}'}^{o}$. 
\begin{figure}[H]
\begin{tikzpicture}[scale=0.70]
\draw[black, dashed] (-0.7,0) arc (0:200:1.4); 
\draw[blue, thick] (-0.7,0) arc (0:38:1.4);
\draw[blue, thick] (0.7,0) arc (180:142:1.4);
\draw[black, dashed] (3.4,-0.5) arc (-20:180:1.4); 
\filldraw[black] (-0.67,0) circle (2pt) node[anchor=east]{$i$};
\filldraw[black] (-0.95,0.8) circle (2pt) node[anchor=south]{$v$};
\filldraw[black] (0.675,0) circle (2pt) node[anchor=west]{$w$};
\filldraw[black] (0.95,0.8) circle (2pt) node[anchor=south]{$j$};
\draw[-,red,thick] (-0.67,0) .. controls (-0.4,0.3) and (0.4,0.3) .. (0.67,0) node [above,midway] {};
\draw[-,red,thick] (-0.95,0.8) .. controls (-0.7,1.3) and (0.7,1.3) .. (0.95,0.8) node [above,midway] {};
\end{tikzpicture}
\caption{Proof of Proposition \ref{prop1}.}
\label{fig10}
\end{figure}
In case (b), changing the web orientation would result in the case (a) situation. By taking inverses of both sides of the equation $\cycle{v,w}\circ \sigma _{\mathbf{v}}^{-o}=\sigma _{\mathbf{v}'}^{-o}$, we obtain $\sigma _{\mathbf{v}}^{o}\circ \cycle{v,w}=\sigma _{\mathbf{v}'}^{o}$.
\end{proof}

\begin{corollary} \label{cor1} Let $\mathbf{v},\mathbf{v}'\in \{0,1\}^{k}$ be two words denoting vertices in the cube of smoothings with a web orientation $o$. 
\begin{enumerate}
\item If $r_{\mathbf{v}}=r_{\mathbf{v'}}$, then the permutations of the triangulated smoothings $\sigma _{\mathbf{v}}^{o}$ and $\sigma _{\mathbf{v}'}^{o}$ have the same parity. 
\item  If $r_{\mathbf{v'}}=r_{\mathbf{v}}+1$, then the permutations of the triangulated smoothings $\sigma _{\mathbf{v}}^{o}$ and $\sigma _{\mathbf{v}'}^{o}$ have different parity. 
\end{enumerate}
\end{corollary}
\begin{proof}
\begin{enumerate}
\item We use induction on the number $r_{\mathbf{v}}$. If $r_{\mathbf{v}}=0$, the statement is trivial since there is only one such vertex $\mathbf{v}=0^{k}$. Now let $n\in \mathbb{N}$ and suppose that the statement is true for all pairs $\mathbf{v}, \mathbf{v}'\in \{0,1\}^{k}$ with $r_{\mathbf{v}}=r_{\mathbf{v}'}<n$. Choose any two words $\mathbf{w}, \mathbf{w}'\in \{0,1\}^{k}$ with $r_{\mathbf{w}}=r_{\mathbf{w}'}=n$ and denote $l=\min \{j\in \{1,2,\ldots k\}\,|\, \mathbf{w}_{j}=1\}$, $l'=\min \{j\in \{1,2,\ldots k\}\,|\, \mathbf{w}_{j}'=1\}$. Let $\mathbf{v}$ (resp. $\mathbf{v}'$) be the word in $\{0,1\}^{k}$ that differs from $\mathbf{w}$ (resp. $\mathbf{w}'$) only at index $l$ (resp. $l'$). Since $r_{\mathbf{v}}=r_{\mathbf{v}'}=n-1$, the permutations $\sigma _{\mathbf{v}}^{o}$ and $\sigma _{\mathbf{v}'}^{o}$ have the same parity by the induction hypothesis. It follows by Proposition \ref{prop1} that also $\sigma _{\mathbf{w}}^{o}$ and $\sigma _{\mathbf{w}'}^{o}$ have the same parity. 
\item Suppose that $r_{\mathbf{v'}}=r_{\mathbf{v}}+1$. We may choose a word $\mathbf{w}\in \{0,1\}^{k}$ with $r_{\mathbf{w}}=r_{\mathbf{v}}$, so that the words $\mathbf{w}$ and $\mathbf{v}'$ differ exactly at one index; denote that index by $l$. Thus $\mathbf{w}_{l}=0$ and $\mathbf{v}'_{l}=1$. By Proposition \ref{prop1}, the permutation $\sigma _{\mathbf{v}'}^{o}$ has different parity than the permutation $\sigma _{\mathbf{w}}^{o}$. Since the parity of permutations $\sigma _{\mathbf{w}}^{o}$ and $\sigma _{\mathbf{v}}^{o}$ is equal by (1), the conclusion follows.  
\end{enumerate}
\end{proof}

The cube of permutation data $\mathcal{C}(\mathcal{D})$ depends on the ordering of crossings in the link diagram $\mathcal{D}$. A permutation of crossings changes the labeling of vertices of $\mathcal{C}(\mathcal{D})$, even though the set of vertices stays the same. A bijection $f\colon \{1,2,\ldots ,k\}\to \{1,2,\ldots ,k\}$ induces a function $\widehat{f}\colon \{0,1\}^{k}\to \{0,1\}^{k}$, defined by $\widehat{f}(\mathbf{v})_{i}=\mathbf{v}_{f(i)}$ for any $\mathbf{v}\in \{0,1\}^{k}$ and $i\in \{1,2,\ldots ,k\}$. Thus, $f$ permutes the labeling $l(\mathbf{v})=\sigma _{\mathbf{v}}$ into $l(\widehat{f}(\mathbf{v}))=\sigma _{\widehat{f}(\mathbf{v})}$.     

\begin{definition} Let $l\colon \{0,1\}^{k}\to S_{2k}$ be a vertex labeling of the $k$-dimensional hypercube $Q_k$ with elements of the permutation group $S_{2k}$ and denote by $\sigma _{\mathbf{v}}=l(\mathbf{v})$ the label of the vertex $\mathbf{v}$. Let $l'\colon \{0,1\}^{k'}\to S_{2k'}$ be a vertex labeling of $Q_{k'}$ with elements of the permutation group $S_{2k'}$ and denote $\sigma _{\mathbf{v}'}=l'(\mathbf{v})$. The vertex labeling $l'$ is called a  \emph{transformation} of the vertex labeling $l$ if one of the following holds: \begin{itemize}
\item[(a)] $k=k'$ and $\sigma _{\mathbf{v}}'=\rho \circ \sigma _{\widehat{f}(\mathbf{v})}\circ \rho ^{-1}$ for every $\mathbf{v}\in \{0,1\}^{k}$ and some permutations $f\in S_{k}$ and $\rho \in S_{2k}$. 
\item[(b)] $k=k'$ and $\sigma _{\mathbf{v}}'=\zeta _{\mathbf{v}}\circ \sigma _{\mathbf{v}}\circ \zeta _{\mathbf{v}}^{-1}$, where $\zeta _{\mathbf{v}}\in G_{\mathbf{v}}$ for every $\mathbf{v}\in \{0,1\}^{k}$. 
\item[(c)] $k'=k+1$ and there exists a label $i\in \{1,2,\ldots ,2k\}$, such that $$\left \{\sigma _{\mathbf{v}\times \{0\}}',\sigma _{\mathbf{v}\times \{1\}}'\right \}=\left \{\sigma _{\mathbf{v}}\circ \cycle{i,2k+2},\cycle{\sigma _{\mathbf{v}}(i),2k+1,2k+2}\circ \sigma _{\mathbf{v}}\right \}$$ for every $\mathbf{v}\in \{0,1\}^{k}$.  
\item[(d)] $k'=k-1$ and there exists a label $i\in \{1,2,\ldots ,2k-2\}$, such that $$\left \{\sigma _{\mathbf{v}\times \{0\}},\sigma _{\mathbf{v}\times \{1\}}\right \}=\left \{\sigma _{\mathbf{v}}'\circ \cycle{i,2k},\cycle{\sigma _{\mathbf{v}}'(i),2k-1,2k}\circ \sigma _{\mathbf{v}}'\right \}$$ for every $\mathbf{v}\in \{0,1\}^{k-1}$.  
\item[(e)] $k'=k+2$ and there exist two labels $i,j\in \{1,2,\ldots ,2k\}$, such that 
\begin{xalignat*}{1}
& \left \{\sigma _{\mathbf{v}\times \{(0,0)\}}',\sigma _{\mathbf{v}\times \{(1,1)\}}'\right \}=\left \{\cycle{j,b}\cycle{\sigma _{\mathbf{v}}(i),a}\cycle{\sigma _{\mathbf{v}}(j),i,c,d}\sigma _{\mathbf{v}},\cycle{i,c}\cycle{\sigma _{\mathbf{v}}(j),d}\cycle{\sigma _{\mathbf{v}}(i),j,b,a}\sigma _{\mathbf{v}}\right \}\\
& \left \{\sigma _{\mathbf{v}\times \{(0,1)\}}',\sigma _{\mathbf{v}\times \{(1,0)\}}'\right \}=\left \{\cycle{j,b}\cycle{\sigma _{\mathbf{v}}(j),d}\cycle{\sigma _{\mathbf{v}}(i),a}\cycle{i,c}\sigma _{\mathbf{v}},\cycle{\sigma _{\mathbf{v}}(i),,j,b,a}\cycle{\sigma _{\mathbf{v}}(j),i,c,d}\sigma _{\mathbf{v}}\right \}
\end{xalignat*} for every $\mathbf{v}\in \{0,1\}^{k}$ with $\{a,b,c,d\}=\{2k+1,2k+2,2k+3,2k+4\}$. 
\item[(f)] $k'=k-2$ and there exist two labels $i,j\in \{1,2,\ldots ,2k-4\}$, such that 
\begin{xalignat*}{1}
& \left \{\sigma _{\mathbf{v}\times \{(0,0)\}},\sigma _{\mathbf{v}\times \{(1,1)\}}\right \}=\left \{\cycle{j,b}\cycle{\sigma _{\mathbf{v}}'(i),a}\cycle{\sigma _{\mathbf{v}}'(j),i,c,d}\sigma _{\mathbf{v}}',\cycle{i,c}\cycle{\sigma _{\mathbf{v}}'(j),d}\cycle{\sigma _{\mathbf{v}}'(i),j,b,a}\sigma _{\mathbf{v}}'\right \}\\
& \left \{\sigma _{\mathbf{v}\times \{(0,1)\}},\sigma _{\mathbf{v}\times \{(1,0)\}}\right \}=\left \{\cycle{j,b}\cycle{\sigma _{\mathbf{v}}'(j),d}\cycle{\sigma _{\mathbf{v}}'(i),a}\cycle{i,c}\sigma _{\mathbf{v}}',\cycle{\sigma _{\mathbf{v}}'(i),j,b,a}\cycle{\sigma _{\mathbf{v}}'(j),i,c,d}\sigma _{\mathbf{v}}'\right \}
\end{xalignat*} for every $\mathbf{v}\in \{0,1\}^{k-2}$ with $\{a,b,c,d\}=\{2k-3,2k-2,2k-1,2k\}$. 
\end{itemize}
Two vertex labelings $l\colon \{0,1\}^{k}\to S_{2k}$ and $l'\colon \{0,1\}^{k'}\to S_{2k'}$ are called \emph{equivalent} if $l'$ can be obtained from $l$ in a finite sequence of transformations. 
\end{definition}

The cartesian product of two hypercube graphs is another hypercube graph: $Q_{m}\Box Q_{n}=Q_{m+n}$. In a type (c) transformation,  a labeling of the $k$-dimensional cube $Q_k$ induces a labeling of $Q_{k+1}=Q_{k}\Box Q_{1}$. Similarly, in a type (e) transformation, a labeling of $Q_k$ induces a labeling of $Q_{k+2}=Q_{k}\Box Q_{2}$. 

\begin{Theorem} The equivalence class of a cube of permutation data is an invariant of the underlying link. 
\end{Theorem}
\begin{proof} First we will prove that any two cubes of permutation data of a given link diagram belong to the same equivalence class, then we will show that the cubes of permutation data of two diagrams, connected by a Reidemeister move, are equivalent.  

Let $V=\{1,2,\ldots ,2k\}$ be the set of labels for a link diagram $\mathcal{D}$ with $k$ crossings. Choose any triangulations $\tau _{1},\tau _{2}\colon E(\mathcal{D})\to V$ of $\mathcal{D}$ and let $o_{i}$ be a web orientation of the cube of smoothings for $(\mathcal{D},\tau _{i})$ for $i=1,2$. Denote by $\mathcal{C}_{1}(\mathcal{D})$ the cube of permutation data, associated with $(\mathcal{D},\tau _1)$ and $o_1$, and let $\{\sigma _{\mathbf{v}}\in S_{2k}\,|\,\mathbf{v}\in \{0,1\}^{k}\}$ be the corresponding vertex labeling of $Q_k$. Similarly, denote by $\mathcal{C}_{2}(\mathcal{D})$ the cube of permutation data, associated with $(\mathcal{D},\tau _2)$ and $o_2$, and let $\{\sigma _{\mathbf{v}}'\in S_{2k}\,|\,\mathbf{v}\in \{0,1\}^{k}\}$ be the corresponding vertex labeling of $Q_k$. Then $\rho =\tau _{2}\circ \tau _{1}^{-1}\colon V\to V$ is a bijection on the set of labels. If $o_{1}=o_2$, then $\sigma _{\mathbf{v}}'=\rho \circ \sigma _{\mathbf{v}}\circ \rho ^{-1}$ and thus $\mathcal{C}_{2}(\mathcal{D})$ is a transformation of $\mathcal{C}_{1}(\mathcal{D})$. If $o_{1}\neq o_2$, then $o_1$ and $o_2$ are opposite web orientations. For every $\mathbf{v}\in \{0,1\}^{k}$, there exists an order 2 element $\zeta _{\mathbf{v}}\in G_{\mathbf{v}}$, for which $\zeta _{\mathbf{v}}\circ \sigma _{\mathbf{v}}\circ \zeta _{\mathbf{v}}^{-1}=\sigma _{\mathbf{v}}^{-1}$. Thus, $\mathcal{C}_{2}(\mathcal{D})$ is obtained from $\mathcal{C}_{1}(\mathcal{D})$ by a transformation of type (a), followed by a transformation of type (b).  

It remains to show that the equivalence class of a cube of permutations is invariant under the Reidemeister moves. Let $(\mathcal{D}_{1},\tau _1)$ and $(\mathcal{D}_{2},\tau _2)$ be two triangulated link diagrams that are identical outside a disk $D$, while inside $D$ they differ by a Reidemeister move. \begin{itemize}
\item Suppose that $(\mathcal{D}_{1},\tau _1)$ and $(\mathcal{D}_{2},\tau _2)$ differ by the first Reidemeister move, during which the edge with label $i$ of the diagram $\mathcal{D}_1$ forms a new crossing. If the diagram $\mathcal{D}_1$ has $k$ crossings, the diagram $\mathcal{D}_2$ contains two additional vertices with labels $2k+1$ and $2k+2$. Every vertex label $\sigma _{\mathbf{v}}\in S_{2k}$ of $Q_{k}$ in $\mathcal{C}(\mathcal{D}_1)$ gives two new vertex labels $\sigma _{\mathbf{w}},\sigma _{\mathbf{w}'}\in S_{2(k+1)}$ of $Q_{k+1}$ in $\mathcal{C}(\mathcal{D}_2)$ with $\mathbf{w},\mathbf{w}'\in \mathbf{v}\times \{0,1\}$, namely $\sigma _{\mathbf{w}}=\sigma _{\mathbf{v}}\circ \cycle{i,2k+2}$ and $\sigma _{\mathbf{w}'}=\cycle{\sigma _{\mathbf{v}}(i),2k+1,2k+2}\circ \sigma _{\mathbf{v}}$, see Figure \ref{fig6}. It follows that $\mathcal{C}(\mathcal{D}_2)$ is a type (c) transformation of $\mathcal{C}(\mathcal{D}_1)$. 
\begin{figure}[H]
\begin{tikzpicture}
\matrix [column sep=1cm]
{
\node (c) {\begin{tikzpicture}[scale=0.80]
\draw[black, dashed] (0,0) circle (40pt);
\draw[black, thick][-] (-1,1) .. controls (-0.3,0) and (0.3,0) .. (1,1) ;
\filldraw[black] (-0.5,0.44) circle (2pt) node[anchor=south]{$i$};
\end{tikzpicture}}; &
\node (a) {\begin{tikzpicture}[scale=0.80]
\draw[black, dashed] (0,0) circle (40pt);
\draw[black, thick] (0,-0.3) circle (10pt);
\draw[black, thick][-] (-1,1) .. controls (-0.3,0) and (0.3,0) .. (1,1) ;
\filldraw[black] (0,-0.65) circle (2pt) node[anchor=north]{$2k+1$};
\filldraw[black] (0.5,0.44) circle (2pt) node[anchor=south]{$2k+2$};
\filldraw[black] (-0.5,0.44) circle (2pt) node[anchor=south]{$i$};
\end{tikzpicture}}; & \node (b) {\begin{tikzpicture}[scale=0.80]
\draw[black, dashed] (0,0) circle (40pt);
\draw[black, thick][-] (-1,1) .. controls (-0.1,0.1) and (-0.7,-0.2) .. (0,-0.4) 
                                           .. controls (0.7,-0.2) and (0.1,0.1) .. (1,1) ;
\filldraw[black] (0,-0.4) circle (2pt) node[anchor=north]{$2k+1$};
\filldraw[black] (-0.55,0.44) circle (2pt) node[anchor=south]{$i$};
\filldraw[black] (0.55,0.44) circle (2pt) node[anchor=south]{$2k+2$};
\end{tikzpicture}}; \\
};
\draw [-,blue,thick] (a.east) -- (b.west) node [above,midway] {};
\draw [-,red,thick] (-2.2,-0.1) .. controls (-2,0.5) and (-1.6,-0.5) .. (-1.3,0.1) node [above,midway] {};
\end{tikzpicture}
\caption{Triangulated smoothings during the first Reidemeister move}
\label{fig6}
\end{figure}

\item Suppose that $(\mathcal{D}_{1},\tau _1)$ and $(\mathcal{D}_{2},\tau _2)$ differ by the second Reidemeister move, during which the edge with label $i$ passes over an edge with label $j$. If $\mathcal{D}_1$ has $k$ crossings, the new diagram $\mathcal{D}_2$ obtains an additional pair of crossings and four additional vertices with labels $a=2k+1$, $b=2k+2$, $c=2k+3$ and $d=2k+4$. The resulting local smoothings are shown in Figure \ref{fig8}. In the other type of the second Reidemeister move, the $00$-smoothing and the $11$-smoothing are exchanged. Looking at Figure \ref{fig8}, we may compute the following relationships for any $\mathbf{v}\in \{0,1\}^{k}$: 
\begin{xalignat*}{1}
& \cycle{j,b}\cycle{\sigma _{\mathbf{v}}(i),a}\cycle{i,c,d,\sigma _{\mathbf{v}}(j)}\sigma _{\mathbf{v}}=\\
& =\cycle{\ldots ,\sigma _{\mathbf{v}}^{-1}(j),b,j,i,a,\sigma _{\mathbf{v}}(i),\ldots }\cycle{\ldots ,\sigma _{\mathbf{v}}^{-1}(i),c,d,\sigma _{\mathbf{v}}(j),\ldots }=\sigma _{\mathbf{v}\times \{(0,0)\}}\\
& \cycle{j,b}\cycle{\sigma _{\mathbf{v}}(j),d}\cycle{\sigma _{\mathbf{v}}(i),a}\cycle{i,c}\sigma _{\mathbf{v}}=\\
& =\cycle{\ldots ,\sigma _{\mathbf{v}}^{-1}(j),b,j,d,\sigma _{\mathbf{v}}(j),\ldots }\cycle{\ldots ,\sigma _{\mathbf{v}}^{-1}(i),c,i,a,\sigma _{\mathbf{v}}(i),\ldots }=\sigma _{\mathbf{v}\times \{(0,1)\}}\\
& \cycle{\sigma _{\mathbf{v}}(i),j,b,a}\cycle{\sigma _{\mathbf{v}}(j),i,c,d}\sigma _{\mathbf{v}}=\\
& =\cycle{\ldots ,\sigma _{\mathbf{v}}^{-1}(i),c,d,\sigma _{\mathbf{v}}(j),\ldots }\cycle{\ldots ,\sigma _{\mathbf{v}}^{-1}(j),b,a,\sigma _{\mathbf{v}}(i),\ldots }\cycle{i,j}=\sigma _{\mathbf{v}\times \{(1,0)\}}\\
& \cycle{\sigma _{\mathbf{v}}(i),j,b,a}\cycle{\sigma _{\mathbf{v}}(j),d}\cycle{i,c}\sigma _{\mathbf{v}}=\\
& =\cycle{\ldots ,\sigma _{\mathbf{v}}^{-1}(i),c,i,j,d,\sigma _{\mathbf{v}}(j),\ldots }\cycle{\ldots ,\sigma _{\mathbf{v}}^{-1}(j),b,a,\sigma _{\mathbf{v}}(i),\ldots }=\sigma _{\mathbf{v}\times \{(1,1)\}}
\end{xalignat*}
Therefore, the cube of permutation data $\mathcal{C}(\mathcal{D}_2)$ is a type (e) transformation of $\mathcal{C}(\mathcal{D}_1)$.

\begin{figure}[H]
\begin{tikzpicture}
\matrix [column sep=1cm]
{
\node {}; & \node (01) {\begin{tikzpicture}[scale=0.80]
\draw[black, dashed] (0,0) circle (40pt);
\draw[black, thick][-] (-1,1) .. controls (0,0.5) and (-0.2,0.2) .. (-0.35,0)
                                           .. controls (-0.2,-0.2) and (0,-0.5) .. (-1,-1);
\draw[black, thick][-] (1,1) .. controls (0,0.5) and (0.2,0.2) .. (0.35,0) 
                                           .. controls (0.2,-0.2) and (0,-0.5) .. (1,-1);
\filldraw[black] (0.5,-0.68) circle (2pt) node[anchor=north]{$d$};
\filldraw[black] (0.5,0.68) circle (2pt) node[anchor=south]{$b$};
\filldraw[black] (-0.5,-0.68) circle (2pt) node[anchor=north]{$c$};
\filldraw[black] (-0.5,0.68) circle (2pt) node[anchor=south]{$a$};
\filldraw[black] (-0.35,0) circle (2pt) node[anchor=east]{$i$};
\filldraw[black] (0.35,0) circle (2pt) node[anchor=west]{$j$};
\end{tikzpicture}}; & \node {};\\ 
\node (00) {\begin{tikzpicture}[scale=0.80]
\draw[black, dashed] (0,0) circle (40pt);
\draw[black, thick][-] (-1,1) .. controls (0,0.7) and (-0.7,-0.2) .. (0,-0.3) 
                                           .. controls (0.7,-0.2) and (0,0.7) .. (1,1);
\draw[black, thick][-] (-1,-1) .. controls (-0.3,-0.4) and (0.3,-0.4) .. (1,-1);
\filldraw[black] (0.5,-0.68) circle (2pt) node[anchor=north]{$d$};
\filldraw[black] (0.5,0.68) circle (2pt) node[anchor=south]{$b$};
\filldraw[black] (-0.5,-0.68) circle (2pt) node[anchor=north]{$c$};
\filldraw[black] (-0.5,0.68) circle (2pt) node[anchor=south]{$a$};
\filldraw[black] (-0.35,0) circle (2pt) node[anchor=east]{$i$};
\filldraw[black] (0.35,0) circle (2pt) node[anchor=west]{$j$};
\end{tikzpicture}}; & \node (10) {\begin{tikzpicture}[scale=0.80]
\draw[black, dashed] (0,0) circle (40pt);
\draw[black, thick] (0,0) circle (10pt);
\draw[black, thick][-] (-1,1) .. controls (-0.3,0.4) and (0.3,0.4) .. (1,1) ;
\draw[black, thick][-] (-1,-1) .. controls (-0.3,-0.4) and (0.3,-0.4) .. (1,-1) ;
\filldraw[black] (0.5,-0.68) circle (2pt) node[anchor=north]{$d$};
\filldraw[black] (0.5,0.68) circle (2pt) node[anchor=south]{$b$};
\filldraw[black] (-0.5,-0.68) circle (2pt) node[anchor=north]{$c$};
\filldraw[black] (-0.5,0.68) circle (2pt) node[anchor=south]{$a$};
\filldraw[black] (-0.35,0) circle (2pt) node[anchor=east]{$i$};
\filldraw[black] (0.35,0) circle (2pt) node[anchor=west]{$j$};
\end{tikzpicture}}; & 
\node (11) {\begin{tikzpicture}[scale=0.80]
\draw[black, dashed] (0,0) circle (40pt);
\draw[black, thick][-] (-1,1) .. controls (-0.3,0.4) and (0.3,0.4) .. (1,1) ;
\draw[black, thick][-] (-1,-1) .. controls (0,-0.7) and (-0.7,0.2) .. (0,0.3) 
                                           .. controls (0.7,0.2) and (0,-0.7) .. (1,-1) ;
\filldraw[black] (0.5,-0.68) circle (2pt) node[anchor=north]{$d$};
\filldraw[black] (0.5,0.68) circle (2pt) node[anchor=south]{$b$};
\filldraw[black] (-0.5,-0.68) circle (2pt) node[anchor=north]{$c$};
\filldraw[black] (-0.5,0.68) circle (2pt) node[anchor=south]{$a$};
\filldraw[black] (-0.35,0) circle (2pt) node[anchor=east]{$i$};
\filldraw[black] (0.35,0) circle (2pt) node[anchor=west]{$j$};
\end{tikzpicture}}; &
\node (a) {\begin{tikzpicture}[scale=0.80]
\draw[black, dashed] (0,0) circle (40pt);
\draw[black, thick][-] (-1,1) .. controls (-0.15,0.5) and (-0.15,-0.5) .. (-1,-1) ;
\draw[black, thick][-] (1,1) .. controls (0.15,0.5) and (0.15,-0.5) .. (1,-1) ;
\filldraw[black] (-0.35,0) circle (2pt) node[anchor=east]{$j$};
\filldraw[black] (0.35,0) circle (2pt) node[anchor=west]{$i$};
\end{tikzpicture}}; \\
}; 
\draw [->,blue,thick] (00.east) -- (01.west) node [above,midway] {};
\draw [->,blue,thick] (00.east) -- (10.west) node [above,midway] {};
\draw [->,blue,thick] (01.east) -- (11.west) node [above,midway] {};
\draw [->,blue,thick] (10.east) -- (11.west) node [above,midway] {};
\draw [-,red,thick] (3.2,-1.6) .. controls (3.5,-1) and (3.6,-2) .. (3.9,-1.4) node [above,midway] {};
\end{tikzpicture}
\caption{Triangulated smoothings during the second Reidemeister move}
\label{fig8}
\end{figure}

\item Suppose that $(\mathcal{D}_{1},\tau _1)$ and $(\mathcal{D}_{2},\tau _2)$ differ by the third Reidemeister move, shown in Figure \ref{fig11}. Denote by $\mathcal{C}(\mathcal{D}_1)$ and $\mathcal{C}(\mathcal{D}_2)$ the cubes of permutation data, belonging to $(\mathcal{D}_{1},\tau _1)$ and $(\mathcal{D}_{2},\tau _2)$, respectively. Let $\{\sigma _{\mathbf{v}}\,|\,\mathbf{v}\in \{0,1\}^{k}\}$ (resp. $\{\sigma _{\mathbf{v}}\,|\,\mathbf{v}\in \{0,1\}^{k}\}$) be the vertex labeling of $Q_k$, corresponding to $\mathcal{C}(\mathcal{D}_1)$ (resp. $\mathcal{C}(\mathcal{D}_2)$). There are nine vertices inside $D$, and the local smoothings of $\mathcal{D}_1$ and $\mathcal{D}_2$ are shown in Figure \ref{fig9}. A local comparison of both cubes of permutation data gives the following relationships: 
\begin{xalignat*}{1}
& \sigma _{000}'=\cycle{a,d}\cycle{b,e}\cycle{c,f}\sigma _{000}\cycle{a,d}\cycle{b,e}\cycle{c,f}\\
& \sigma _{100}'=\cycle{a,d}\cycle{b,e}\cycle{c,f}\sigma _{100}\cycle{a,d}\cycle{b,e}\cycle{c,f}\\
& \sigma _{011}'=\cycle{a,d}\cycle{b,e}\cycle{c,f}\sigma _{011}\cycle{a,d}\cycle{b,e}\cycle{c,f}\\
& \sigma _{111}'=\cycle{a,d}\cycle{b,e}\cycle{c,f}\sigma _{111}\cycle{a,d}\cycle{b,e}\cycle{c,f}\\
& \sigma _{001}'=\cycle{a,d}\cycle{b,e}\cycle{c,f}\sigma _{010}\cycle{a,d}\cycle{b,e}\cycle{c,f}\\
& \sigma _{010}'=\cycle{a,d}\cycle{b,e}\cycle{c,f}\sigma _{001}\cycle{a,d}\cycle{b,e}\cycle{c,f}\\
& \sigma _{101}'=\cycle{a,d}\cycle{b,e}\cycle{c,f}\sigma _{110}\cycle{a,d}\cycle{b,e}\cycle{c,f}\\
& \sigma _{110}'=\cycle{a,d}\cycle{b,e}\cycle{c,f}\sigma _{101}\cycle{a,d}\cycle{b,e}\cycle{c,f}
\end{xalignat*}
To simplify notation, we supressed the indices of all other crossings except the triple of crossings inside the disk $D$. Denote by $\tau \colon \{1,2,\ldots ,k\}\to \{1,2\ldots ,k\}$ the transposition of indices, corresponding to the second and the third crossing in this triple. Observe that the words $000$, $100$, $011$ and $111$ are all invariant to the exchange of the second and third letter. We may thus conclude that $$\sigma _{\mathbf{v}}'=\cycle{a,d}\cycle{b,e}\cycle{c,f}\sigma _{\widehat{\tau }(\mathbf{v})}\cycle{a,d}\cycle{b,e}\cycle{c,f}$$ for every $\mathbf{v}\in \{0,1\}^{k}$. Therefore, the cube of permutation data $\mathcal{C}(\mathcal{D}_2)$ is a type (a) transformation of $\mathcal{C}(\mathcal{D}_1)$. 
\end{itemize}
\end{proof}

\begin{example} A cube of permutations for an $n$-component unlink is equivalent to a point (a $0$-dimensional hypercube), labeled by the identity permutation. 
\end{example}

\section*{Acknowledgements}

The author was supported by the Slovenian Research Agency grant P1-0292. Moreover, the author would like to thank her colleague dr. Primo\v z \v Sparl for several useful comments on an earlier version of this paper.

 \begin{figure}[H]
\begin{tikzpicture}
\matrix [column sep=1cm]
{
\node {}; & \node (001) {\begin{tikzpicture}
\draw[black, dashed] (0,0) circle (40pt);
\draw[black, thick][-] (-0.7,1.2) .. controls (-0.75,1) and (-0.35,0.4) .. (0,0.35)
                                                    .. controls (0.5,-0.2) and (0.35,-0.9) .. (0.7,-1.24);
\draw[black, thick][-] (-1.4,0.1) .. controls (-0.5,0.3) and (-0.2,-0.3) .. (-0.3,-0.15)
                                                    .. controls (-0.1,-0.5) and (-0.55,-1) .. (-0.7,-1.24);
\draw[black, thick][-] (1.4,0.1) .. controls (0.5,0.1) and (0.5,0.6) .. (0.7,1.2);
\filldraw[black] (-0.5,-0.93) circle (2pt) node[anchor=east]{$a$};
\filldraw[black] (0.5,-0.93) circle (2pt) node[anchor=west]{$b$};
\filldraw[black] (1.05,0.15) circle (2pt) node[anchor=north]{$c$};
\filldraw[black] (0.6,0.85) circle (2pt) node[anchor=east]{$d$};
\filldraw[black] (-1.05,0.15) circle (2pt) node[anchor=north]{$f$};
\filldraw[black] (-0.6,0.85) circle (2pt) node[anchor=west]{$e$};
\filldraw[black] (-0.3,-0.15) circle (2pt) node[anchor=east]{$g$};
\filldraw[black] (0.3,-0.15) circle (2pt) node[anchor=west]{$h$};
\filldraw[black] (0,0.35) circle (2pt) node[anchor=south]{$i$};
\end{tikzpicture}}; & 
\node (011) {\begin{tikzpicture}
\draw[black, dashed] (0,0) circle (40pt);
\draw[black, thick][-] (-0.7,-1.24) .. controls (-0.4,-1) and (-0.35,0.2) .. (0,0.35)
                                                       .. controls (0.35,0.2) and (0.4,-1) .. (0.7,-1.24);
\draw[black, thick][-] (-1.4,0.1) .. controls (-0.5,0.1) and (-0.5,0.6) .. (-0.7,1.2);
\draw[black, thick][-] (1.4,0.1) .. controls (0.5,0.1) and (0.5,0.6) .. (0.7,1.2);
\filldraw[black] (-0.5,-0.93) circle (2pt) node[anchor=east]{$a$};
\filldraw[black] (0.5,-0.93) circle (2pt) node[anchor=west]{$b$};
\filldraw[black] (1.05,0.15) circle (2pt) node[anchor=north]{$c$};
\filldraw[black] (0.6,0.85) circle (2pt) node[anchor=east]{$d$};
\filldraw[black] (-1.05,0.15) circle (2pt) node[anchor=north]{$f$};
\filldraw[black] (-0.6,0.85) circle (2pt) node[anchor=west]{$e$};
\filldraw[black] (-0.3,-0.15) circle (2pt) node[anchor=east]{$g$};
\filldraw[black] (0.3,-0.15) circle (2pt) node[anchor=west]{$h$};
\filldraw[black] (0,0.35) circle (2pt) node[anchor=south]{$i$};
\end{tikzpicture}}; & \node {}; \\
\node (000) {\begin{tikzpicture}
\draw[black, dashed] (0,0) circle (40pt);
\draw[black, thick][-] (-0.7,-1.24) .. controls (-0.3,-0.5) and (0.3,-0.5) .. (0.7,-1.24);
\draw[black, thick][-] (-1.4,0.1) .. controls (-0.5,0.3) and (-0.3,-0.45) .. (0.3,-0.15)
                                                 .. controls (0.3,0.6) and (-0.45,0.1) .. (-0.7,1.2);
\draw[black, thick][-] (1.4,0.1) .. controls (0.5,0.1) and (0.5,0.6) .. (0.7,1.2);
\filldraw[black] (-0.5,-0.93) circle (2pt) node[anchor=east]{$a$};
\filldraw[black] (0.5,-0.93) circle (2pt) node[anchor=west]{$b$};
\filldraw[black] (1.05,0.15) circle (2pt) node[anchor=north]{$c$};
\filldraw[black] (0.6,0.85) circle (2pt) node[anchor=east]{$d$};
\filldraw[black] (-1.05,0.15) circle (2pt) node[anchor=north]{$f$};
\filldraw[black] (-0.6,0.85) circle (2pt) node[anchor=west]{$e$};
\filldraw[black] (-0.3,-0.15) circle (2pt) node[anchor=north]{$g$};
\filldraw[black] (0.3,-0.15) circle (2pt) node[anchor=north]{$h$};
\filldraw[black] (0,0.35) circle (2pt) node[anchor=south]{$i$};
\end{tikzpicture}}; &
\node (010) {\begin{tikzpicture}
\draw[black, dashed] (0,0) circle (40pt);
\draw[black, thick] (0,0) circle (10pt);
\draw[black, thick][-] (-0.7,-1.24) .. controls (-0.3,-0.5) and (0.3,-0.5) .. (0.7,-1.24);
\draw[black, thick][-] (-1.4,0.1) .. controls (-0.5,0.1) and (-0.5,0.6) .. (-0.7,1.2);
\draw[black, thick][-] (1.4,0.1) .. controls (0.5,0.1) and (0.5,0.6) .. (0.7,1.2);
\filldraw[black] (-0.5,-0.93) circle (2pt) node[anchor=east]{$a$};
\filldraw[black] (0.5,-0.93) circle (2pt) node[anchor=west]{$b$};
\filldraw[black] (1.05,0.15) circle (2pt) node[anchor=north]{$c$};
\filldraw[black] (0.6,0.85) circle (2pt) node[anchor=east]{$d$};
\filldraw[black] (-1.05,0.15) circle (2pt) node[anchor=north]{$f$};
\filldraw[black] (-0.6,0.85) circle (2pt) node[anchor=west]{$e$};
\filldraw[black] (-0.3,-0.15) circle (2pt) node[anchor=east]{$g$};
\filldraw[black] (0.3,-0.15) circle (2pt) node[anchor=west]{$h$};
\filldraw[black] (0,0.35) circle (2pt) node[anchor=south]{$i$};
\end{tikzpicture}}; &
\node (101) {\begin{tikzpicture}
\draw[black, dashed] (0,0) circle (40pt);
\draw[black, thick][-] (-0.7,1.2) .. controls (-0.55,0.4) and (-0.2,0.35) .. (0,0.35)
                                                    .. controls (0.2,0.35) and (0.55,0.4) .. ( (0.7,1.2);
\draw[black, thick][-] (-1.4,0.1) .. controls (-0.5,0.3) and (-0.2,-0.3) .. (-0.3,-0.15)
                                                    .. controls (-0.1,-0.5) and (-0.55,-1) .. (-0.7,-1.24);
\draw[black, thick][-] (1.4,0.1) .. controls (0.5,0.3) and (0.2,-0.3) .. (0.3,-0.15)
                                                    .. controls (0.1,-0.5) and (0.55,-1) .. (0.7,-1.24);
\filldraw[black] (-0.5,-0.93) circle (2pt) node[anchor=east]{$a$};
\filldraw[black] (0.5,-0.93) circle (2pt) node[anchor=west]{$b$};
\filldraw[black] (1.05,0.15) circle (2pt) node[anchor=north]{$c$};
\filldraw[black] (0.6,0.85) circle (2pt) node[anchor=east]{$d$};
\filldraw[black] (-1.05,0.15) circle (2pt) node[anchor=north]{$f$};
\filldraw[black] (-0.6,0.85) circle (2pt) node[anchor=west]{$e$};
\filldraw[black] (-0.3,-0.15) circle (2pt) node[anchor=east]{$g$};
\filldraw[black] (0.3,-0.15) circle (2pt) node[anchor=west]{$h$};
\filldraw[black] (0,0.35) circle (2pt) node[anchor=south]{$i$};
\end{tikzpicture}};  &
\node (111) {\begin{tikzpicture}
\draw[black, dashed] (0,0) circle (40pt);
\draw[black, thick][-] (0.7,1.2) .. controls (0.75,1) and (0.35,0.4) .. (0,0.35)
                                                    .. controls (-0.5,-0.2) and (-0.35,-0.9) .. (-0.7,-1.24);
\draw[black, thick][-] (-1.4,0.1) .. controls (-0.5,0.1) and (-0.5,0.6) .. (-0.7,1.2);
\draw[black, thick][-] (1.4,0.1) .. controls (0.5,0.3) and (0.2,-0.3) .. (0.3,-0.15)
                                                    .. controls (0.1,-0.5) and (0.55,-1) .. (0.7,-1.24);
\filldraw[black] (-0.5,-0.93) circle (2pt) node[anchor=east]{$a$};
\filldraw[black] (0.5,-0.93) circle (2pt) node[anchor=west]{$b$};
\filldraw[black] (1.05,0.15) circle (2pt) node[anchor=north]{$c$};
\filldraw[black] (0.6,0.85) circle (2pt) node[anchor=east]{$d$};
\filldraw[black] (-1.05,0.15) circle (2pt) node[anchor=north]{$f$};
\filldraw[black] (-0.6,0.85) circle (2pt) node[anchor=west]{$e$};
\filldraw[black] (-0.3,-0.15) circle (2pt) node[anchor=east]{$g$};
\filldraw[black] (0.3,-0.15) circle (2pt) node[anchor=west]{$h$};
\filldraw[black] (0,0.35) circle (2pt) node[anchor=south]{$i$};
\end{tikzpicture}}; \\
\node {}; & \node (100) {\begin{tikzpicture}
\draw[black, dashed] (0,0) circle (40pt);
\draw[black, thick][-] (-0.7,1.2) .. controls (-0.55,0.4) and (-0.2,0.35) .. (0,0.35)
                                                    .. controls (0.2,0.35) and (0.55,0.4) .. ( (0.7,1.2);
\draw[black, thick][-] (-0.7,-1.24) .. controls (-0.3,-0.5) and (0.3,-0.5) .. (0.7,-1.24);
\draw[black, thick][-] (-1.4,0.1) .. controls (-0.5,0.3) and (-0.3,-0.3) .. (0,-0.3)
                                                    .. controls (0.3,-0.3) and (0.5,0.3) .. (1.4,0.1);
\filldraw[black] (-0.5,-0.93) circle (2pt) node[anchor=east]{$a$};
\filldraw[black] (0.5,-0.93) circle (2pt) node[anchor=west]{$b$};
\filldraw[black] (1.05,0.15) circle (2pt) node[anchor=north]{$c$};
\filldraw[black] (0.6,0.85) circle (2pt) node[anchor=east]{$d$};
\filldraw[black] (-1.05,0.15) circle (2pt) node[anchor=north]{$f$};
\filldraw[black] (-0.6,0.85) circle (2pt) node[anchor=west]{$e$};
\filldraw[black] (-0.3,-0.15) circle (2pt) node[anchor=north]{$g$};
\filldraw[black] (0.3,-0.15) circle (2pt) node[anchor=north]{$h$};
\filldraw[black] (0,0.35) circle (2pt) node[anchor=south]{$i$};
\end{tikzpicture}};  & 
\node (110) {\begin{tikzpicture}
\draw[black, dashed] (0,0) circle (40pt);
\draw[black, thick][-] (-0.7,-1.24) .. controls (-0.3,-0.5) and (0.3,-0.5) .. (0.7,-1.24);
\draw[black, thick][-] (-1.4,0.1) .. controls (-0.5,0.1) and (-0.5,0.6) .. (-0.7,1.2);
\draw[black, thick][-] (1.4,0.1) .. controls (0.5,0.3) and (0.3,-0.45) .. (-0.3,-0.15)
                                                 .. controls (-0.3,0.6) and (0.45,0.1) .. (0.7,1.2);
\filldraw[black] (-0.5,-0.93) circle (2pt) node[anchor=east]{$a$};
\filldraw[black] (0.5,-0.93) circle (2pt) node[anchor=west]{$b$};
\filldraw[black] (1.05,0.15) circle (2pt) node[anchor=north]{$c$};
\filldraw[black] (0.6,0.85) circle (2pt) node[anchor=east]{$d$};
\filldraw[black] (-1.05,0.15) circle (2pt) node[anchor=north]{$f$};
\filldraw[black] (-0.6,0.85) circle (2pt) node[anchor=west]{$e$};
\filldraw[black] (-0.3,-0.15) circle (2pt) node[anchor=east]{$g$};
\filldraw[black] (0.3,-0.15) circle (2pt) node[anchor=west]{$h$};
\filldraw[black] (0,0.35) circle (2pt) node[anchor=south]{$i$};
\end{tikzpicture}}; \\
};
\draw[blue,thick][->] (000.east) -- (001.west); 
\draw[blue,thick][->] (000.east) -- (010.west); 
\draw[blue,thick][->] (000.east) -- (100.west); 
\draw[blue,thick][->] (011.east) -- (111.west); 
\draw[blue,thick][->] (101.east) -- (111.west); 
\draw[blue,thick][->] (110.east) -- (111.west); 
\draw[blue,thick][->] (001.east) -- (011.west); 
\draw[blue,thick][->] (001.east) -- (101.west); 
\draw[blue,thick][->] (010.east) -- (011.west); 
\draw[blue,thick][->] (010.east) -- (110.west); 
\draw[blue,thick][->] (100.east) -- (101.west); 
\draw[blue,thick][->] (100.east) -- (110.west); 
\end{tikzpicture}
\begin{tikzpicture}
\matrix [column sep=1cm]
{
\node {}; & \node (001) {\begin{tikzpicture}
\draw[black, dashed] (0,0) circle (40pt);
\draw[black, thick] (0,0) circle (10pt);
\draw[black, thick][-] (-0.7,1.2) .. controls (-0.6,0.3) and (0.6,0.3) .. (0.7,1.2);                                            
\draw[black, thick][-] (-1.4,0.1) .. controls (-0.5,0.3) and (-0.1,-0.5) .. (-0.7,-1.24);
\draw[black, thick][-] (1.4,0.1) .. controls (0.5,0.3) and (0.1,-0.5) .. (0.7,-1.24);
\filldraw[black] (-0.5,-0.93) circle (2pt) node[anchor=east]{$a$};
\filldraw[black] (0.5,-0.93) circle (2pt) node[anchor=west]{$b$};
\filldraw[black] (1.05,0.15) circle (2pt) node[anchor=north]{$c$};
\filldraw[black] (0.6,0.85) circle (2pt) node[anchor=east]{$d$};
\filldraw[black] (-1.05,0.15) circle (2pt) node[anchor=north]{$f$};
\filldraw[black] (-0.6,0.85) circle (2pt) node[anchor=west]{$e$};
\filldraw[black] (0.3,0.15) circle (2pt) node[anchor=west]{$g$};
\filldraw[black] (-0.3,0.15) circle (2pt) node[anchor=east]{$h$};
\filldraw[black] (0,-0.35) circle (2pt) node[anchor=north]{$i$};
\end{tikzpicture}}; & 
\node (011) {\begin{tikzpicture}
\draw[black, dashed] (0,0) circle (40pt);
\draw[black, thick][-] (-0.7,1.2) .. controls (-0.65,1) and (-0.26,-0.2) .. (0,-0.35)
                                                       .. controls (0.26,-0.2) and (0.65,1) .. (0.7,1.2);
\draw[black, thick][-] (-1.4,0.1) .. controls (-0.5,0.3) and (-0.1,-0.5) .. (-0.7,-1.24);
\draw[black, thick][-] (1.4,0.1) .. controls (0.5,0.3) and (0.1,-0.5) .. (0.7,-1.24);
\filldraw[black] (-0.5,-0.93) circle (2pt) node[anchor=east]{$a$};
\filldraw[black] (0.5,-0.93) circle (2pt) node[anchor=west]{$b$};
\filldraw[black] (1.05,0.15) circle (2pt) node[anchor=north]{$c$};
\filldraw[black] (0.6,0.85) circle (2pt) node[anchor=east]{$d$};
\filldraw[black] (-1.05,0.15) circle (2pt) node[anchor=north]{$f$};
\filldraw[black] (-0.6,0.85) circle (2pt) node[anchor=west]{$e$};
\filldraw[black] (0.3,0.15) circle (2pt) node[anchor=west]{$g$};
\filldraw[black] (-0.3,0.15) circle (2pt) node[anchor=east]{$h$};
\filldraw[black] (0,-0.35) circle (2pt) node[anchor=north]{$i$};
\end{tikzpicture}}; & \node {}; \\
\node (000) {\begin{tikzpicture}
\draw[black, dashed] (0,0) circle (40pt);
\draw[black, thick][-] (-0.7,1.2) .. controls (-0.6,0.3) and (0.6,0.3) .. (0.7,1.2);                                            
\draw[black, thick][-] (-1.4,0.1) .. controls (-0.5,0.3) and (-0.1,-0.5) .. (-0.7,-1.24);
\draw[black, thick][-] (1.4,0.1) .. controls (0.3,0.3) and (-0.2,0.05) .. (-0.3,0.15)
                                                .. controls (-0.1,-0.4) and (0.5,-0.9) .. (0.7,-1.24);
\filldraw[black] (-0.5,-0.93) circle (2pt) node[anchor=east]{$a$};
\filldraw[black] (0.5,-0.93) circle (2pt) node[anchor=west]{$b$};
\filldraw[black] (1.05,0.15) circle (2pt) node[anchor=north]{$c$};
\filldraw[black] (0.6,0.85) circle (2pt) node[anchor=east]{$d$};
\filldraw[black] (-1.05,0.15) circle (2pt) node[anchor=north]{$f$};
\filldraw[black] (-0.6,0.85) circle (2pt) node[anchor=west]{$e$};
\filldraw[black] (0.3,0.15) circle (2pt) node[anchor=north]{$g$};
\filldraw[black] (-0.3,0.15) circle (2pt) node[anchor=east]{$h$};
\filldraw[black] (0,-0.35) circle (2pt) node[anchor=north]{$i$};
\end{tikzpicture}}; &
\node (010) {\begin{tikzpicture}
\draw[black, dashed] (0,0) circle (40pt);
\draw[black, thick][-] (-0.7,1.2) .. controls (-0.75,1) and (-0.4,0.4) .. (0,-0.35)
                                                    .. controls (0.3,-0.7) .. (0.7,-1.24);
\draw[black, thick][-] (1.4,0.1) .. controls (0.9,0.2) .. (0.3,0.15)
                                                .. controls (0.5,0.6) .. (0.7,1.2);
\draw[black, thick][-] (-1.4,0.1) .. controls (-0.5,0.3) and (-0.1,-0.5) .. (-0.7,-1.24);
\filldraw[black] (-0.5,-0.93) circle (2pt) node[anchor=east]{$a$};
\filldraw[black] (0.5,-0.93) circle (2pt) node[anchor=west]{$b$};
\filldraw[black] (1.05,0.15) circle (2pt) node[anchor=north]{$c$};
\filldraw[black] (0.6,0.85) circle (2pt) node[anchor=east]{$d$};
\filldraw[black] (-1.05,0.15) circle (2pt) node[anchor=north]{$f$};
\filldraw[black] (-0.6,0.85) circle (2pt) node[anchor=west]{$e$};
\filldraw[black] (0.3,0.15) circle (2pt) node[anchor=north]{$g$};
\filldraw[black] (-0.3,0.15) circle (2pt) node[anchor=east]{$h$};
\filldraw[black] (0,-0.35) circle (2pt) node[anchor=north]{$i$};
\end{tikzpicture}}; &
\node (101) {\begin{tikzpicture}
\draw[black, dashed] (0,0) circle (40pt);
\draw[black, thick][-] (0.7,1.2) .. controls (0.6,0.3) and (-0.6,0.3) .. (-0.7,1.2);                                            
\draw[black, thick][-] (1.4,0.1) .. controls (0.5,0.3) and (0.1,-0.5) .. (0.7,-1.24);
\draw[black, thick][-] (-1.4,0.1) .. controls (-0.3,0.3) and (0.2,0.05) .. (0.3,0.15)
                                                .. controls (0.1,-0.4) and (-0.5,-0.9) .. (-0.7,-1.24);
\filldraw[black] (-0.5,-0.93) circle (2pt) node[anchor=west]{$a$};
\filldraw[black] (0.5,-0.93) circle (2pt) node[anchor=west]{$b$};
\filldraw[black] (1.05,0.15) circle (2pt) node[anchor=north]{$c$};
\filldraw[black] (0.6,0.85) circle (2pt) node[anchor=east]{$d$};
\filldraw[black] (-1.05,0.15) circle (2pt) node[anchor=north]{$f$};
\filldraw[black] (-0.6,0.85) circle (2pt) node[anchor=west]{$e$};
\filldraw[black] (0.3,0.15) circle (2pt) node[anchor=west]{$g$};
\filldraw[black] (-0.3,0.15) circle (2pt) node[anchor=north]{$h$};
\filldraw[black] (0,-0.35) circle (2pt) node[anchor=north]{$i$};
\end{tikzpicture}};  &
\node (111) {\begin{tikzpicture}
\draw[black, dashed] (0,0) circle (40pt);
\draw[black, thick][-] (0.7,1.2) .. controls (0.75,1) and (0.4,0.4) .. (0,-0.35)
                                                    .. controls (-0.3,-0.7) .. (-0.7,-1.24);
\draw[black, thick][-] (-1.4,0.1) .. controls (-0.9,0.2) .. (-0.3,0.15)
                                                .. controls (-0.5,0.6) .. (-0.7,1.2);
\draw[black, thick][-] (1.4,0.1) .. controls (0.5,0.3) and (0.1,-0.5) .. (0.7,-1.24);
\filldraw[black] (-0.5,-0.93) circle (2pt) node[anchor=east]{$a$};
\filldraw[black] (0.5,-0.93) circle (2pt) node[anchor=west]{$b$};
\filldraw[black] (1.05,0.15) circle (2pt) node[anchor=north]{$c$};
\filldraw[black] (0.6,0.85) circle (2pt) node[anchor=east]{$d$};
\filldraw[black] (-1.05,0.15) circle (2pt) node[anchor=north]{$f$};
\filldraw[black] (-0.6,0.85) circle (2pt) node[anchor=west]{$e$};
\filldraw[black] (0.3,0.15) circle (2pt) node[anchor=west]{$g$};
\filldraw[black] (-0.3,0.15) circle (2pt) node[anchor=north]{$h$};
\filldraw[black] (0,-0.35) circle (2pt) node[anchor=north]{$i$};
\end{tikzpicture}}; \\
\node {}; & \node (100) {\begin{tikzpicture}
\draw[black, dashed] (0,0) circle (40pt);
\draw[black, thick][-] (-0.7,-1.24) .. controls (-0.3,-0.55) and (-0.2,-0.4) .. (0,-0.35)
                                                    .. controls (0.2,-0.4) and (0.3,-0.55) .. ( (0.7,-1.24);
\draw[black, thick][-] (-0.7,1.2) .. controls (-0.63,0.4) and (0.63,0.4) .. (0.7,1.2);
\draw[black, thick][-] (-1.4,0.1) .. controls (-0.8,0.15) and (-0.3,0.15) .. (0,0.15)
                                                    .. controls (0.3,0.15) and (0.8,0.15) .. (1.4,0.1);
\filldraw[black] (-0.5,-0.93) circle (2pt) node[anchor=east]{$a$};
\filldraw[black] (0.5,-0.93) circle (2pt) node[anchor=west]{$b$};
\filldraw[black] (1.05,0.15) circle (2pt) node[anchor=north]{$c$};
\filldraw[black] (0.6,0.85) circle (2pt) node[anchor=north]{$d$};
\filldraw[black] (-1.05,0.15) circle (2pt) node[anchor=north]{$f$};
\filldraw[black] (-0.6,0.85) circle (2pt) node[anchor=north]{$e$};
\filldraw[black] (0.3,0.15) circle (2pt) node[anchor=north]{$g$};
\filldraw[black] (-0.3,0.15) circle (2pt) node[anchor=north]{$h$};
\filldraw[black] (0,-0.35) circle (2pt) node[anchor=north]{$i$};
\end{tikzpicture}};  & 
\node (110) {\begin{tikzpicture}
\draw[black, dashed] (0,0) circle (40pt);
\draw[black, thick][-] (1.4,0.1) .. controls (0.9,0.2) .. (0.3,0.15)
                                                .. controls (0.5,0.6) .. (0.7,1.2);
\draw[black, thick][-] (-1.4,0.1) .. controls (-0.9,0.2) .. (-0.3,0.15)
                                                .. controls (-0.5,0.6) .. (-0.7,1.2);
\draw[black, thick][-] (-0.7,-1.24) .. controls (-0.3,-0.55) and (-0.2,-0.4) .. (0,-0.35)
                                                    .. controls (0.2,-0.4) and (0.3,-0.55) .. ( (0.7,-1.24);
\filldraw[black] (-0.5,-0.93) circle (2pt) node[anchor=east]{$a$};
\filldraw[black] (0.5,-0.93) circle (2pt) node[anchor=west]{$b$};
\filldraw[black] (1.05,0.15) circle (2pt) node[anchor=north]{$c$};
\filldraw[black] (0.6,0.85) circle (2pt) node[anchor=east]{$d$};
\filldraw[black] (-1.05,0.15) circle (2pt) node[anchor=north]{$f$};
\filldraw[black] (-0.6,0.85) circle (2pt) node[anchor=west]{$e$};
\filldraw[black] (0.3,0.15) circle (2pt) node[anchor=north]{$g$};
\filldraw[black] (-0.3,0.15) circle (2pt) node[anchor=north]{$h$};
\filldraw[black] (0,-0.35) circle (2pt) node[anchor=north]{$i$};
\end{tikzpicture}}; \\
};
\draw[blue,thick][->] (000.east) -- (001.west); 
\draw[blue,thick][->] (000.east) -- (010.west); 
\draw[blue,thick][->] (000.east) -- (100.west); 
\draw[blue,thick][->] (011.east) -- (111.west); 
\draw[blue,thick][->] (101.east) -- (111.west); 
\draw[blue,thick][->] (110.east) -- (111.west); 
\draw[blue,thick][->] (001.east) -- (011.west); 
\draw[blue,thick][->] (001.east) -- (101.west); 
\draw[blue,thick][->] (010.east) -- (011.west); 
\draw[blue,thick][->] (010.east) -- (110.west); 
\draw[blue,thick][->] (100.east) -- (101.west); 
\draw[blue,thick][->] (100.east) -- (110.west); 
\end{tikzpicture}
\caption{Local smoothings of a triangulated diagram, connected by the third Reidemeister move}
\label{fig9}
\end{figure}

\end{document}